\newtheorem{theorem}{Theorem}[section]
\newtheorem{lemma}[theorem]{Lemma}
\newtheorem{proposition}[theorem]{Proposition}
\newtheorem{corollary}[theorem]{Corollary}
\newtheorem{remark}[theorem]{Remark}
\newtheorem{definition}{Definition}[section]
\newtheorem{example}{Example}[section]
\newcommand{\R}[0]{\mathbb{R}}
\newcommand{\ten}[1]{\mathcal{#1}}
\title{P-Tensors, P$_0$-Tensors, and \\ Tensor Complementarity Problem\thanks{This research was supported by the National Natural Science Foundation of China (11271084,11301022,11431002), the State Key Laboratory of Rail Traffic Control and Safety, Beijing Jiaotong University (RCS2014ZT20,RCS2014ZZ01), and the Hong Kong Research Grant Council (Grant No.
PolyU 502111, 501212, 501913 and 15302114).}}
\author{
Weiyang Ding\thanks{School of Mathematical Sciences, Fudan University, Shanghai 200433, P. R. China; ({\tt weiyang.ding@gmail.com}).}
\and
Ziyan Luo\thanks{State Key Laboratory of Rail Traffic Control and Safety, Beijing Jiaotong University, Beijing 100044, P.R. China; ({\tt starkeynature@hotmail.com}).}
\and
Liqun Qi\thanks{Department of Applied Mathematics, The Hong Kong Polytechnic University, Hung Hom, Kowloon, Hong Kong. ({\tt liqun.qi@polyu.edu.hk}).}
}
\date{\today}
\begin{document}

\maketitle
\begin{abstract}
The concepts of P- and P$_0$-matrices are generalized to P- and P$_0$-tensors of even and odd orders via homogeneous formulae. Analog to the matrix case, our P-tensor definition encompasses many important classes of tensors such as the positive definite tensors, the nonsingular M-tensors, the nonsingular H-tensors with positive diagonal entries, the strictly diagonally dominant tensors with positive diagonal entries, etc. As even-order symmetric PSD tensors are exactly even-order symmetric P$_0$-tensors, our definition of P$_0$-tensors, to some extent, can be regarded as an extension of PSD tensors for the odd-order case. Along with the basic properties of P- and P$_0$-tensors, the relationship among P$_0$-tensors and other extensions of PSD tensors are then discussed for comparison. Many structured tensors are also shown to be P- and P$_0$-tensors. As a theoretical application, the P-tensor complementarity problem is discussed and shown to possess a nonempty and compact solution set.

  \vskip 12pt
  \noindent {\bf Key words.} { P-tensor, P$_0$-tensor, positive semidefinite tensor, tensor complementarity problem}

  \vskip 12pt
  \noindent {\bf AMS subject classifications. }{15A18, 15A69, 15B48, 90C33}

\end{abstract}


\section{Introduction}

The P-matrix, first introduced by Fiedler and Pt{\'a}k \cite{Fiedler62}, is an important class of special matrices whose determinants of all principal submatrices are positive. As an essential concept in matrix theory (see, e.g., \cite{HuangTZ07,Pena09,Pena11,Rump03,Song99}), P-matrices contain many notable matrices as their subclasses, such as positive definite matrices, nonsingular M-matrices, nonsingular completely positive matrices, strictly diagonally dominant matrices, etc., Besides its significance in matrix analysis, P-matrix also plays an important role in both the theory and the computations of linear complementarity problems (see, e.g., \cite{Berman94,Cottle92,Pang03a,Pang03b}). Given a matrix $A$ and a vector ${\bf q}$, a linear complementarity problem ${\rm LCP}({\bf q},A)$ is to find a vector ${\bf x}$ such that
\begin{equation}\label{eq_lcp}
  {\bf x} \geq {\bf 0},\ A{\bf x} + {\bf q} \geq {\bf 0},\ \langle {\bf x}, A{\bf x} + {\bf q} \rangle = 0,
\end{equation}
where $\langle\cdot,\cdot\rangle$ denotes the inner product of two vectors. The matrix $A$ is a P-matrix if and only if the corresponding ${\rm LCP}({\bf q},A)$ has a unique solution for any vector ${\bf q}$. The P-matrix is also a widely-applied tool in nonlinear complementarity problems. For instance, Qi et al. \cite{Qi00} employed some properties of P-matrices to investigate the convergence of smoothing Newton methods for nonlinear complementarity problems.

With an emerging interest in the assets of multilinear algebra concentrated on the higher-order tensors, more and more structured matrices have been extended to higher-order structured tensors. Very recently, Song and Qi \cite{Song15a} extended the concepts of P- and P$_0$-matrices to P- and P$_0$-tensors. Their applications in tensor complementarity problems (TCP) (an extension of linear complementarity problems as studied in \cite{Che15,Luo15,Song15c,Song15b}) were discussed in \cite{Song15c,Song15b}. As pointed out by Yuan and You \cite{Yuan14}, there are no odd-order P-tensors according to the definition of Song and Qi \cite{Song15a}. This deficiency is more or less due to the involved non-homogeneous term. In this paper, we will introduce a homogenous definition of P- and P$_0$-tensors, and discuss their applications in TCP and others.

Analog to the matrix case, our P-tensor definition encompasses such notable classes as the positive definite tensors, the nonsingular M-tensors, the nonsingular H-tensors with positive diagonal entries, the strictly diagonally dominant tensors with positive diagonal entries, etc. Of particular concern is that our definition of P$_0$-tensors also extend the class of positive semi-definite (PSD) tensors to the odd-order case. It is well-known that PSD tensors have massive applications in optimal control, magnetic resonance imaging, spectral hypergraph theory and tensor complementarity problem, etc. However, the definition of PSD tensors restricts the order of the involved tensors to be even. As even-order symmetric PSD tensors are exactly even-order symmetric P$_0$-tensors, our definition of P$_0$-tensors, to some extent, can be regarded as a counterpart of PSD tensors for the odd-order case.

The rest of the paper is organized as follows. In Section 2, the definitions of P- and P$_0$-tensors are introduced and some basic properties are characterized. In Section 3, relationship among P$_0$-(P-)tensors,  PSD (PD) tensors, and other PSD-related tensors such as (strictly) copositive tensors, (strongly) completely positive tensors, and doubly nonnegative tensors are discussed. In Section 4, some structured P- and P$_0$-tensors are established. Properties on P- and P$_0$-tensor Cones, together with their relationship to other special tensor cones are analyzed in Section 5. Applications of P- and P$_0$-tensors in TCP and others are described in Section 6. Concluding remarks are drawn in Section 7.


\section{Definitions and Basic Properties}

The concept of P-matrix \cite{Fiedler62} is originally defined by that all of its principal minors are positive. As an important equivalent definition of the P-matrix, we know that a matrix $A$ is a P-matrix if and only if for each nonzero vector ${\bf x} \in \mathbb{R}^n$ there exists some index $i$ such that $x_i (A{\bf x})_i > 0$ (see \cite{Berman94,Fiedler62}). Following this equivalent definition, Song and Qi \cite{Song15a} generalized the concept of P-matrices to higher order tensors and they call an $m^{\rm th}$-order $n$-dimensional tensor $\ten{A}$ a P-tensor if and only if for each nonzero vector ${\bf x} \in \mathbb{R}^n$ there exists some index $i$ such that $x_i (\ten{A}{\bf x}^{m-1})_i > 0$. Nevertheless, it has been shown that there are no odd-order P-tensors according under the Song-Qi's definition (see \cite{Yuan14}). To make up this deficiency, we now propose a modified definition of P-tensors as follows.

\begin{definition}[P- and P$_0$-tensors]\label{def_pten}
  An $m^{\rm th}$-order $n$-dimensional tensor $\ten{A}$ is called a P-tensor, if for each nonzero ${\bf x} \in \mathbb{R}^n$ there exists some index $i$ such that
  \begin{equation}\label{eq_pten}
  x_i^{m-1} (\ten{A}{\bf x}^{m-1})_i > 0.
  \end{equation}
  An $m^{\rm th}$-order $n$-dimensional tensor $\ten{A}$ is called a P$_0$-tensor, if for each nonzero ${\bf x} \in \mathbb{R}^n$ there exists some index $i$ such that
  \begin{equation}\label{eq_p0ten}
  x_i \neq 0 \quad\text{and}\quad x_i^{m-1} (\ten{A}{\bf x}^{m-1})_i \geq 0.
  \end{equation}
\end{definition}

It is apparent to see that our definition is the same as the one of Song and Qi in the even-order case. And the expressions \eqref{eq_pten} and \eqref{eq_p0ten} can be rewritten as
$$
x_i \neq 0 \quad\text{and}\quad (\ten{A}{\bf x}^{m-1})_i > 0\ (\text{or } \geq 0),
$$
respectively. Thus, the properties about the P- and P$_0$-tensors in \cite{Song15a,Song15c,Song15b,Yuan14} are certainly valid for our P- and P$_0$-tensors of even orders. Moreover, we will shortly show that most of those properties also holds for our P- and P$_0$-tensors of odd orders.


We start with the spectral property of P- and P$_0$-tensors. According to Qi's work in \cite{Qi05}, for any given $m^{\rm th}$-order $n$-dimensional tensor $\ten{A}$, if there is a real scalar $\lambda \in \mathbb{R}$ and a real vector ${\bf x} \in \mathbb{R}^n$ satisfying
$$
\ten{A}{\bf x}^{m-1} = \lambda {\bf x}^{[m-1]},
$$
then $\lambda$ is called an H-eigenvalue of $\ten{A}$ and ${\bf x}$ is called a corresponding H-eigenvector. Here ${\bf x}^{[m-1]}:=\left(x_1^{m-1},\ldots,x_n^{m-1}\right)^T$. The following theorem shows the positivity (nonnegativity) of the H-eigenvalues of a P-tensor (P$_0$-tensor).

\begin{theorem}\label{thm_heig}
  A P-tensor (P$_0$-tensor) has no non-positive (negative) H-eigenvalues.
\end{theorem}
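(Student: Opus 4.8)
The plan is to unwind Definition~\ref{def_pten} along an H-eigenvector. Suppose $\lambda \in \mathbb{R}$ is an H-eigenvalue of the $m^{\rm th}$-order $n$-dimensional tensor $\ten{A}$, with a corresponding real H-eigenvector ${\bf x} \neq {\bf 0}$, so that $(\ten{A}{\bf x}^{m-1})_i = \lambda x_i^{m-1}$ for every index $i$. The one computation driving everything is that, upon multiplying the $i$-th component of this identity by $x_i^{m-1}$,
$$
x_i^{m-1} (\ten{A}{\bf x}^{m-1})_i = \lambda\,\bigl(x_i^{m-1}\bigr)^2 = \lambda\, x_i^{2(m-1)},
$$
so the sign of the left-hand side coincides with the sign of $\lambda$ whenever $x_i \neq 0$ (since then $x_i^{2(m-1)} > 0$, regardless of the parity of $m$).

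Next I would feed this into the two definitions. If $\ten{A}$ is a P-tensor, then applying \eqref{eq_pten} to the nonzero vector ${\bf x}$ produces an index $i$ with $x_i^{m-1}(\ten{A}{\bf x}^{m-1})_i > 0$; this in particular forces $x_i \neq 0$, and then the displayed identity gives $\lambda\, x_i^{2(m-1)} > 0$ with $x_i^{2(m-1)} > 0$, hence $\lambda > 0$. So every H-eigenvalue is strictly positive, which is the claim for P-tensors. If instead $\ten{A}$ is only a P$_0$-tensor, \eqref{eq_p0ten} applied to ${\bf x}$ produces an index $i$ with $x_i \neq 0$ and $x_i^{m-1}(\ten{A}{\bf x}^{m-1})_i \geq 0$; the same identity then gives $\lambda\, x_i^{2(m-1)} \geq 0$ with $x_i^{2(m-1)} > 0$, hence $\lambda \geq 0$, i.e.\ no negative H-eigenvalue.

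I do not expect any real obstacle: the result falls out immediately from the homogeneous form of the definition. The only point needing a little care is that in the P$_0$ case one must use the clause $x_i \neq 0$ built into \eqref{eq_p0ten} — without it one could only say $\lambda\, x_i^{2(m-1)} \geq 0$ at some index, which is vacuous if that $x_i$ vanishes. It is also worth remarking that nothing in the argument uses $m$ even; by contrast the analogous statement under the Song--Qi definition cannot work in odd order, precisely because $x_i(\ten{A}{\bf x}^{m-1})_i$ is not forced to be a square there.
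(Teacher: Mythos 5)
Your argument is correct and is essentially the paper's own proof: multiply the eigenvalue equation componentwise by $x_i^{m-1}$ to get $x_i^{m-1}(\ten{A}{\bf x}^{m-1})_i = \lambda x_i^{2(m-1)}$ and invoke the defining property at the nonzero H-eigenvector. The paper only adds the trivial remark that the claim is vacuous if no H-eigenvalue exists and omits the P$_0$ case as analogous, which you spell out correctly.
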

\begin{proof}
  Let $\ten{A}$ be an $m^{\rm th}$-order $n$-dimensional P-tensor. If $\ten{A}$ has no H-eigenvalue, then the result is trivial. Otherwise, we use $\lambda \in \mathbb{R}$ and ${\bf x} \in \mathbb{R}^n$ to denote an H-eigenvalue of $\ten{A}$ and a corresponding H-eigenvector, respectively. That is, $\ten{A}{\bf x}^{m-1} = \lambda {\bf x}^{[m-1]}$, which implies $x_i^{m-1} (\ten{A}{\bf x}^{m-1})_i = \lambda x_i^{2(m-1)}$ for all $i=1,2,\dots,n$. By definition, we know that ${\bf x}\neq {\bf 0}$. Thus, we can always find some index $i$ such that the left-hand side is positive, which further implies that the H-eigenvalue $\lambda$ must be positive as well.

  The proof of the results about P$_0$-tensors is similar, thus we omit it.
\end{proof}

Moreover, it can be easily verified that the property of a P- or P$_0$-tensor is also preserved by all of its principal subtensors.

\begin{theorem}\label{thm_subten}
  If an $m^{\rm th}$-order $n$-dimensional tensor $\ten{A}$ is a P- or P$_0$-tensor, then each of its principal subtensor is also a P- or P$_0$-tensor, respectively.
\end{theorem}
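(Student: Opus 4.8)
The plan is to use the standard zero-padding argument that turns a test vector for a principal subtensor into a test vector for the whole tensor. Fix a nonempty index set $J \subseteq \{1,2,\ldots,n\}$ and let $\ten{A}_J$ denote the corresponding principal subtensor, which is again $m^{\rm th}$-order with dimension $|J|$. Given an arbitrary nonzero ${\bf y} \in \R^{|J|}$, whose coordinates we index by $J$, define ${\bf x} \in \R^n$ by $x_i = y_i$ for $i \in J$ and $x_i = 0$ for $i \notin J$. Since ${\bf y} \neq {\bf 0}$, we have ${\bf x} \neq {\bf 0}$.

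The key computational observation is that, for every $i \in J$, the entries of $\ten{A}{\bf x}^{m-1}$ on the coordinates of $J$ agree with those of $\ten{A}_J{\bf y}^{m-1}$:
$$
(\ten{A}{\bf x}^{m-1})_i = \sum_{i_2,\ldots,i_m \in J} a_{i i_2 \cdots i_m}\, y_{i_2}\cdots y_{i_m} = (\ten{A}_J{\bf y}^{m-1})_i \qquad (i \in J),
$$
because any term in the full sum that contains a factor $x_k$ with $k \notin J$ vanishes. Consequently $x_i^{m-1}(\ten{A}{\bf x}^{m-1})_i = y_i^{m-1}(\ten{A}_J{\bf y}^{m-1})_i$ for all $i \in J$.

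Now suppose $\ten{A}$ is a P-tensor. Applying Definition~\ref{def_pten} to the nonzero vector ${\bf x}$ produces an index $i$ with $x_i^{m-1}(\ten{A}{\bf x}^{m-1})_i > 0$; in particular $x_i \neq 0$, which forces $i \in J$. Combining this with the identity above yields $y_i^{m-1}(\ten{A}_J{\bf y}^{m-1})_i > 0$, so $\ten{A}_J$ is a P-tensor. The P$_0$ case is handled identically: the index $i$ furnished by \eqref{eq_p0ten} satisfies $x_i \neq 0$ (hence $i \in J$ and $y_i \neq 0$) together with $x_i^{m-1}(\ten{A}{\bf x}^{m-1})_i \geq 0$, and both conditions transfer verbatim to ${\bf y}$ and $\ten{A}_J$.

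I do not expect any genuine obstacle; the argument is elementary. The only points needing a little care are the index bookkeeping—verifying that evaluating $\ten{A}$ at the padded vector ${\bf x}$ really reproduces $\ten{A}_J{\bf y}^{m-1}$ on the coordinates indexed by $J$—and the remark that the index returned by the P- (resp.\ P$_0$-) condition automatically lies in $J$, precisely because that condition demands the corresponding coordinate of ${\bf x}$ to be nonzero. This is also the reason the homogeneous expression $x_i^{m-1}(\ten{A}{\bf x}^{m-1})_i$ makes the restriction work cleanly even when $m$ is odd.
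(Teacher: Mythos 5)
Your proof is correct and uses essentially the same zero-padding idea as the paper: extend the subtensor's test vector by zeros, observe that $\ten{A}{\bf x}^{m-1}$ restricted to the chosen index set coincides with $\ten{A}_J{\bf y}^{m-1}$, and note that the index supplied by the P- (or P$_0$-) condition must lie in the support. The only cosmetic difference is that you argue directly while the paper argues by contradiction, and you also spell out the P$_0$ case, which the paper leaves to the reader.
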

\begin{proof}
  Let $\ten{A}$ be a P-tensor. Without loss of generality, we can assume that its first $k \times k \times \dots \times k$ subtensor $\ten{A}_{1:k}$ is not a P-tensor, i.e., there exists some nonzero ${\bf y} \in \mathbb{R}^k$ such that $y_i^{m-1} (\ten{A}_{1:k}{\bf y}^{m-1})_i \leq 0$ for all $i=1,2,\dots,n$. Denote ${\bf x} = [{\bf y}^\top,{\bf 0}^\top]^\top \in \mathbb{R}^n$. Then we have
  $$
  \ten{A}{\bf x}^{m-1} = \begin{bmatrix}
    \ten{A}_{1:k} {\bf y}^{m-1} \\ \ast
  \end{bmatrix}.
  $$
  Hence, the products of the entries of ${\bf x}^{[m-1]}$ and the corresponding entries of $\ten{A}{\bf x}^{m-1}$ are
  $$
  x_i^{m-1} (\ten{A}{\bf x}^{m-1})_i = \left\{\begin{array}{ll}
    y_i^{m-1} (\ten{A}_{1:k}{\bf y}^{m-1})_i \leq 0, & i = 1,2,\dots,k, \\
    0, & i = k+1,k+2,\dots,n.
  \end{array}\right.
  $$
  This contradicts that $\ten{A}$ is a P-tensor. Therefore, each principal subtensor of a P-tensor is also a P-tensor.
\end{proof}

From this theorem, we can directly point out that all the diagonal entries of a P-tensor (or P$_0$-tensor) are positive (or nonnegative), since each diagonal entry is a smallest principal subtensor. This is one of the most easily checkable necessary conditions of P- and P$_0$-tensors.

The following equivalent definition of P-tensors describes the positive definiteness of a P-tensor corresponding to a fixed vector.
\begin{theorem}
  An $m^{\rm th}$-order $n$-dimensional tensor $\ten{A}$ is a P-tensor if and only if for each nonzero vector ${\bf x}\in \mathbb{R}^n$, there exists a positive diagonal matrix $D_{\bf x}$ such that
  $$
  \big\langle D_{\bf x} {\bf x}^{[m-1]}, \ten{A}{\bf x}^{m-1} \big\rangle > 0.
  $$
\end{theorem}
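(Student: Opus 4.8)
The plan is to prove both directions by constructing the diagonal matrix $D_{\bf x}$ entrywise from the sign pattern of the vectors ${\bf x}^{[m-1]}$ and $\ten{A}{\bf x}^{m-1}$. First I would establish the ``only if'' direction. Fix a nonzero ${\bf x}\in\mathbb{R}^n$. For each index $i$, consider the product $p_i := x_i^{m-1}(\ten{A}{\bf x}^{m-1})_i$. I would define $(D_{\bf x})_{ii}$ to be some small positive number $\varepsilon_i>0$ when $p_i\le 0$ and some suitably larger positive number when $p_i>0$, so that the single strictly positive term guaranteed by Definition~\ref{def_pten} dominates the (finitely many, bounded) nonpositive contributions. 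Concretely, since $\ten{A}$ is a P-tensor there is an index $i_0$ with $p_{i_0}>0$; choosing $(D_{\bf x})_{i_0 i_0}$ large enough and all other $(D_{\bf x})_{ii}$ small enough makes $\sum_i (D_{\bf x})_{ii}\, p_i = \big\langle D_{\bf x}{\bf x}^{[m-1]},\ten{A}{\bf x}^{m-1}\big\rangle > 0$.

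For the ``if'' direction I would argue by contraposition. Suppose $\ten{A}$ is not a P-tensor, so there is a nonzero ${\bf x}$ with $x_i^{m-1}(\ten{A}{\bf x}^{m-1})_i\le 0$ for every $i$. Then for \emph{any} positive diagonal matrix $D_{\bf x}$ the inner product $\big\langle D_{\bf x}{\bf x}^{[m-1]},\ten{A}{\bf x}^{m-1}\big\rangle = \sum_i (D_{\bf x})_{ii}\, x_i^{m-1}(\ten{A}{\bf x}^{m-1})_i$ is a nonnegative combination of nonpositive numbers, hence $\le 0$, contradicting the assumed property. This gives the equivalence.

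I expect the main (and really the only) subtlety to be the quantitative choice of the diagonal entries in the forward direction: one must make sure that the positive term can be scaled up and the nonpositive terms scaled down uniformly, which is immediate because there are only finitely many indices and each $|p_i|$ is a fixed finite quantity once ${\bf x}$ is fixed. A clean way to phrase it is: take $(D_{\bf x})_{ii}=1$ for all $i\ne i_0$ and $(D_{\bf x})_{i_0 i_0} = 1 + \big(\sum_{i\ne i_0}|p_i|\big)/p_{i_0}$, or more simply observe that the set of admissible $D_{\bf x}$ is an open cone and the inequality is a single strict linear inequality in the diagonal entries that is satisfiable precisely because some $p_i$ is positive. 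No use of the tensor structure beyond the entrywise P-tensor condition from Definition~\ref{def_pten} is needed, so the argument is essentially identical to the matrix case.
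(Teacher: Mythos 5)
Your proposal is correct and takes essentially the same approach as the paper: the forward direction is proved by choosing diagonal weights so that the single strictly positive term $x_{i_0}^{m-1}(\ten{A}{\bf x}^{m-1})_{i_0}$ dominates the nonpositive ones (the paper uses weights $\delta_i+\epsilon$ with indicator $\delta_i$ and a small $\epsilon$, you use a large weight at $i_0$ and unit weights elsewhere, which is the same idea and if anything cleaner), and the reverse direction is the same sign argument that the paper declares obvious and omits.
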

\begin{proof}
  For a tensor $\ten{A}$ and a vector ${\bf x}$, denote
  $$
  \delta_i = \left\{\begin{array}{ll}
    1, & x_i^{m-1} (\ten{A}{\bf x}^{m-1})_i > 0, \\
    0, & x_i^{m-1} (\ten{A}{\bf x}^{m-1})_i \leq 0.
  \end{array}\right.
  $$
  By the definition, if $\ten{A}$ is a P-tensor, then there must be some index $i$ such that $\delta_i > 0$. Thus it holds that $\sum_{i=1}^n \delta_i x_i^{m-1} (\ten{A}{\bf x}^{m-1})_i > 0$. Let
  $$
  0 < \epsilon < \frac{\sum_{i=1}^n \delta_i x_i^{m-1} (\ten{A}{\bf x}^{m-1})_i}{\big|\sum_{i=1}^n x_i^{m-1} (\ten{A}{\bf x}^{m-1})_i\big|}.
  $$
  Hence, if we denote $D_{\bf x} = {\rm diag}(\delta_1+\epsilon,\delta_2+\epsilon,\dots,\delta_n+\epsilon)$, then the diagonal entries of $D_{\bf x}$ are positive and
  $$
  \big\langle D_{\bf x} {\bf x}^{[m-1]}, \ten{A}{\bf x}^{m-1} \big\rangle = \sum_{i=1}^n (\delta_i+\epsilon) x_i^{m-1} (\ten{A}{\bf x}^{m-1})_i > 0.
  $$
  The reverse part is obvious, thus we omit its proof.
\end{proof}


\section{Comparison with Extensions of PSD Tensors}

Positive semidefinite (PSD) tensors were first introduced and systemically investigated by Qi \cite{Qi05}, which have wide applications in optimal control (see \cite{Anderson75}), magnetic resonance imaging \cite{QYW10}, spectral hypergraph theory \cite{Hu15,Qi2014,Shao15}, tensor complementarity problem \cite{Che15,Luo15,Song15b,Song15c}, etc.  PSD tensors are a generalization of PSD matrices and are defined as follows.

\begin{definition}[Positive definite \cite{Qi05,CQi2014}]\label{def_psd}
  An $m^{\rm th}$-order $n$-dimensional tensor $\ten{A}$ is called positive definite (PD) if $\ten{A}{\bf x}^{m} > 0$ for any nonzero ${\bf x}\in \mathbb{R}^n$; And it is called positive semidefinite (PSD) if $\ten{A}{\bf x}^{m} \geq 0$ for any ${\bf x}\in \mathbb{R}^n$.
\end{definition}

Actually, we can see from the definition that a PSD tensor is not necessarily symmetric. However, we are still most interested in the symmetric case, otherwise we can study the symmetrization (see \cite{CQi2014} for details) of a nonsymmetric PSD tensor instead. Even without the symmetry, it is easily concluded that a PD or PSD tensor is also a P- or P$_0$-tensor, respectively.

\begin{proposition}\label{prop_psd}
  A positive definite tensor is a P-tensor, and a positive semidefinite tensor is a P$_0$-tensor.
\end{proposition}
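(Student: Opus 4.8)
The plan is to reduce the claim to the elementary identity
$\ten{A}{\bf x}^m=\sum_{i=1}^n x_i\,(\ten{A}{\bf x}^{m-1})_i$,
which holds for every tensor $\ten{A}$ and every ${\bf x}\in\R^n$ by expanding both sides. Fix a nonzero ${\bf x}$ and write $S=\{i: x_i\neq 0\}$, which is nonempty. If $\ten{A}$ is positive definite, the left-hand side is strictly positive, so at least one summand $x_i(\ten{A}{\bf x}^{m-1})_i$ is strictly positive, and necessarily $i\in S$ for such an index. If $\ten{A}$ is merely positive semidefinite, the same sum — which equals $\sum_{i\in S}x_i(\ten{A}{\bf x}^{m-1})_i$ since the remaining terms vanish — is nonnegative, so at least one index $i\in S$ satisfies $x_i(\ten{A}{\bf x}^{m-1})_i\geq 0$. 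In both cases it then remains only to translate the sign of $x_i(\ten{A}{\bf x}^{m-1})_i$ into the sign of $x_i^{m-1}(\ten{A}{\bf x}^{m-1})_i$ demanded by \eqref{eq_pten}, respectively \eqref{eq_p0ten}.

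When $m$ is even this translation is immediate: for $i\in S$ the numbers $x_i$ and $x_i^{m-1}$ carry the same nonzero sign, so $x_i^{m-1}(\ten{A}{\bf x}^{m-1})_i$ has the same sign as $x_i(\ten{A}{\bf x}^{m-1})_i$; hence a positive definite tensor meets \eqref{eq_pten} and a positive semidefinite tensor meets \eqref{eq_p0ten}, and the even-order case is complete. The only point needing separate handling is odd order, where $x_i^{m-1}\geq 0$ instead of sharing the sign of $x_i$; but here the assertion is degenerate rather than hard. Since $\ten{A}(-{\bf x})^m=-\ten{A}{\bf x}^m$ for odd $m$, there is no odd-order positive definite tensor, so the first assertion is vacuous; and an odd-order positive semidefinite tensor must satisfy $\ten{A}{\bf x}^m\equiv 0$, after which \eqref{eq_p0ten} is checked directly (it is automatic in the symmetric case, where such a tensor is the zero tensor).

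So the strategy is: state the identity, extract one good index from the sum, and finish by the even-order parity observation, appending a brief remark for odd orders. I do not anticipate a genuine obstacle; the only care points are restricting the sum to the support of ${\bf x}$ before extracting the index, so that the chosen $i$ truly has $x_i\neq 0$ as \eqref{eq_p0ten} requires, and keeping the parity bookkeeping straight when passing between $x_i$ and $x_i^{m-1}$.
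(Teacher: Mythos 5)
Your even-order argument coincides in substance with the paper's proof: both reduce the claim to the identity $\ten{A}{\bf x}^m=\sum_{i=1}^n x_i(\ten{A}{\bf x}^{m-1})_i$ together with the parity fact that $x_i^{m-1}$ has the sign of $x_i$ on ${\rm supp}({\bf x})$ when $m$ is even, and your observation that odd-order PD tensors do not exist is correct. The gap is the final clause of your odd-order discussion: for an odd-order PSD tensor, i.e.\ one with $\ten{A}{\bf x}^m\equiv 0$, you assert that \eqref{eq_p0ten} ``is checked directly.'' It is not, and in the nonsymmetric case it is false. Take $m=3$, $n=2$ with $a_{112}=1$, $a_{211}=-1$ and all other entries zero. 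Then $\ten{A}{\bf x}^3=x_1^2x_2-x_1^2x_2=0$ for every ${\bf x}$, so $\ten{A}$ is PSD under Definition \ref{def_psd} (which explicitly allows nonsymmetric tensors), yet for ${\bf x}=(1,-1)^\top$ one has $(\ten{A}{\bf x}^2)_1=x_1x_2=-1$ and $(\ten{A}{\bf x}^2)_2=-x_1^2=-1$, so $x_i^{2}(\ten{A}{\bf x}^2)_i<0$ for both indices of ${\rm supp}({\bf x})$ and \eqref{eq_p0ten} fails. The reason your strategy cannot close this case is the very sign bookkeeping you flag: from $\sum_i x_i(\ten{A}{\bf x}^{m-1})_i=0$ you only get an index with $x_i(\ten{A}{\bf x}^{m-1})_i\ge 0$, and when $x_i<0$ this does not yield $(\ten{A}{\bf x}^{m-1})_i\ge 0$, which is what \eqref{eq_p0ten} demands for odd $m$ since then $x_i^{m-1}>0$.

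Only the symmetric subcase survives, as your parenthetical correctly notes, because a symmetric tensor whose form vanishes identically is the zero tensor, which is a P$_0$-tensor. So either restrict the odd-order half of the claim to symmetric PSD tensors, in which case your argument is complete, or the assertion needs a justification that does not exist. For comparison, the paper's own proof dismisses the odd case by declaring that a nonzero PSD tensor must have even order; that step tacitly presumes symmetry as well, and the example above contradicts it verbatim, so your write-up and the paper share the same blind spot---yours merely makes it visible by promising a ``direct check'' at exactly the point where the statement breaks down.
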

\begin{proof}
  If $\ten{A}$ is a zero tensor, then this is true obviously. Let $\ten{A}$ be an $m^{\rm th}$-order $n$-dimensional nonzero positive semidefinite tensor. Thus $m$ is even. Assume on the contrary that $\ten{A}$ is not a P$_0$-tensor. Then we can find some nonzero ${\bf x}\in \mathbb{R}^n$ such that
  $$
  x_i^{m-1} (\ten{A}{\bf x}^{m-1})_i<0,\quad \forall i\in {\rm supp}({\bf x}):=\{i \in [n]: x_i\neq 0\}.
  $$
  Since $m$ is even, the above equation is equivalent to
  $$
  x_i  (\ten{A}{\bf x}^{m-1})_i<0,\quad \forall i\in {\rm supp}({\bf x}),
  $$
  which further implies that $\ten{A}x^m = \sum_{i=1}^n x_i  (\ten{A}{\bf x}^{m-1})_i < 0$. This contradicts to the positive semidefiniteness of $\ten{A}$. Thus $\ten{A}$ is a P$_0$-tensor. Similarly, we can prove the PD case.
\end{proof}

The indeed incompleteness in the definition of PSD tensors is the absence of odd-order PSD tensors, which can be observed apparently by replacing ${\bf x}$ by $-{\bf x}$ when the order $m$ is odd. Nevertheless, the odd-order cases are not allowed to be ignored. For instance, there is no intrinsical gap between the definitions and properties of the even-order uniform hypergraphs and the odd-order ones. And it is well-known that the Laplacian tensor of an even-order uniform hypergraph is PSD, but we have not got a parallel result for the Laplacian tensor of an odd-order uniform hypergraph yet.

Song and Qi \cite{Song15a} pointed out that an even-order symmetric tensor is PD or PSD if and only if it is a P- or P$_0$-tensor, respectively, since a P- or P$_0$-tensor has no nonpositive or negative H-eigenvalues, respectively. Thereby, the equivalence between the PD tensors and the even-order symmetric P-tensors inspires that the odd-order symmetric P-tensors may be a good extension of the positive definiteness to the odd-order case. Apart from symmetric P- and P$_0$-tensors, we have so far other three kinds of extensions of the PD and PSD tensor to the odd-order case, i.e., (strictly) copositive tensors, (strongly) completely positive tensors, and (strongly) doubly nonnegative tensors. We shall compare our extension with those approaches in this section.

Copositivity might be the first attempt to generalize the concept of positive definiteness (see \cite{Qi13,Song15d}). Studying on copositive tensors is restricted on the nonnegative orthant $\mathbb{R}_+^n := \{{\bf x} \in \mathbb{R}^n:\, x_i \geq 0,\,i=1,2,\dots,n\}$, which thus avoids the ill-posedness of the PSD tensors of odd orders and admits the PSD tensor cone as a subset.

\begin{definition}[Copositive \cite{Qi13}]
  An $m^{\rm th}$-order $n$-dimensional tensor $\ten{A}$ is called copositive if $\ten{A}{\bf x}^{m} \geq 0$ for any ${\bf x}\in \mathbb{R}_+^n$; And it is called strictly copositive if $\ten{A}{\bf x}^{m} > 0$ for any nonzero ${\bf x}\in \mathbb{R}_+^n$.
\end{definition}

For even-order tensors, the definitions directly imply that a PSD tensor is copositive. Moreover, we shall see shortly that this is also true for odd-order P-tensors. We need a lemma about the spectra of a copositive tensor from Song and Qi \cite[Theorems 4.1 and 4.2]{Song15d}.

\begin{lemma}[\cite{Song15d}]
  Let $\ten{A}$ be an $m^{\rm th}$-order $n$-dimensional symmetric tensor. Then $\ten{A}$ is copositive if and only if every principal subtensor has no negative H-eigenvalue with a positive H-eigenvector; And $\ten{A}$ is strictly copositive if and only if every principal subtensor has no nonpositive H-eigenvalue with a positive H-eigenvector.
\end{lemma}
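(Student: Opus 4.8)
The plan is to prove both equivalences by the same two-sided argument, handling the strict case in parallel with all inequalities tightened. I assume only the standard facts that a symmetric tensor $\ten{A}$ satisfies $\nabla(\ten{A}{\bf x}^m)=m\,\ten{A}{\bf x}^{m-1}$ and that, for ${\bf x}$ supported on an index set $S$, one has $\ten{A}{\bf x}^m=\ten{A}_S({\bf x}_S)^m$ and $(\ten{A}{\bf x}^{m-1})_i=(\ten{A}_S({\bf x}_S)^{m-1})_i$ for $i\in S$.

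First I would dispatch the easy direction. If $\ten{A}$ is copositive, then every principal subtensor $\ten{A}_S$ is copositive as well, since a nonnegative vector supported on $S$ extends by zeros to a vector in $\mathbb{R}_+^n$ on which $\ten{A}{\bf x}^m=\ten{A}_S({\bf x}_S)^m$. Now suppose some $\ten{A}_S$ had a negative H-eigenvalue $\lambda$ with a positive H-eigenvector ${\bf y}$, so that $\ten{A}_S{\bf y}^{m-1}=\lambda{\bf y}^{[m-1]}$. Pairing with ${\bf y}$ gives $\ten{A}_S{\bf y}^m=\lambda\sum_i y_i^m<0$, because ${\bf y}>{\bf 0}$ and $\lambda<0$, contradicting copositivity of $\ten{A}_S$. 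Replacing ``$<$'' by ``$\leq$'' and ``negative'' by ``nonpositive'' (and using only ${\bf y}\neq{\bf 0}$) handles the strictly copositive statement verbatim.

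For the converse, suppose $\ten{A}$ is not copositive, so $\ten{A}{\bf x}^m<0$ for some ${\bf x}\in\mathbb{R}_+^n$; after scaling we may assume ${\bf x}$ lies in the compact set $K=\{{\bf x}\in\mathbb{R}_+^n:\sum_i x_i^m=1\}$. Minimizing the continuous function ${\bf x}\mapsto\ten{A}{\bf x}^m$ over $K$, the minimum is attained at some ${\bf x}^*$ with $\ten{A}({\bf x}^*)^m<0$. Let $S={\rm supp}({\bf x}^*)$. Restricting to vectors supported on $S$, the extension-by-zeros identity shows ${\bf x}^*_S$ minimizes ${\bf y}\mapsto\ten{A}_S{\bf y}^m$ over $\{{\bf y}\in\mathbb{R}_+^S:\sum_i y_i^m=1\}$, and ${\bf x}^*_S$ lies in the interior of $\mathbb{R}_+^S$ since all its entries are positive. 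Hence the only binding constraint at ${\bf x}^*_S$ is the normalization, whose gradient $m({\bf x}^*_S)^{[m-1]}$ is nonzero there, so the Lagrange condition applies; using symmetry to write $\nabla(\ten{A}_S{\bf y}^m)=m\,\ten{A}_S{\bf y}^{m-1}$, it yields $\ten{A}_S({\bf x}^*_S)^{m-1}=\mu({\bf x}^*_S)^{[m-1]}$ for some scalar $\mu$. Pairing with ${\bf x}^*_S$ gives $\mu=\ten{A}_S({\bf x}^*_S)^m=\ten{A}({\bf x}^*)^m<0$, so $\ten{A}_S$ is a principal subtensor with a negative H-eigenvalue and a positive H-eigenvector --- the desired contradiction. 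For the strictly copositive case, start instead from a nonzero ${\bf x}\in\mathbb{R}_+^n$ with $\ten{A}{\bf x}^m\leq0$ and run the identical minimization to obtain $\mu\leq0$.

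The routine parts are the first direction and the existence of the minimizer by compactness of $K$. The step that needs care is the Lagrange-multiplier reduction in the converse: one must pass to the support $S$ so that the minimizer becomes an interior point of the relevant orthant, which annihilates the inequality multipliers and leaves a single regular equality constraint, and one must invoke the symmetry hypothesis to recognize the stationarity equation as an H-eigenvalue equation for $\ten{A}_S$. A secondary subtlety worth spelling out is that the restriction of a global minimizer over $K$ is genuinely a minimizer of the lower-dimensional problem on $S$; this again follows from the identity $\ten{A}_S({\bf y}_S)^m=\ten{A}{\bf y}^m$ for ${\bf y}$ supported on $S$.
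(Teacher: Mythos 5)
Your proof is correct; note that the paper itself gives no argument for this lemma, importing it directly from Song and Qi \cite{Song15d} (their Theorems 4.1 and 4.2). Your two-sided argument --- the easy direction by pairing the eigenvalue equation with the positive eigenvector, and the converse by minimizing $\ten{A}{\bf x}^m$ over the compact slice of $\mathbb{R}_+^n$, passing to the support so the Lagrange condition yields an H-eigenpair of the corresponding principal subtensor --- is essentially the standard proof used in that cited reference, with the key points (symmetry for the gradient identity, restriction to the support to kill the inactive inequality constraints) handled correctly.
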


By Theorem \ref{thm_subten}, each principal subtensor of a P- or P$_0$-tensor is also a P- or P$_0$-tensor, thus has no nonpositive or negative H-eigenvalue, respectively. Then the relationship between P- or P$_0$-tensors and (strictly) copositive tensors follows directly by the above lemma.

\begin{proposition}\label{prop_cop}
  A symmetric P-tensor is strictly copositive, and a symmetric P$_0$-tensor is copositive.
\end{proposition}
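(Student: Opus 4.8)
The plan is to chain together Theorem~\ref{thm_subten}, Theorem~\ref{thm_heig}, and the Song--Qi lemma recalled just above. First I would note that if $\ten{A}$ is a symmetric P-tensor, then by Theorem~\ref{thm_subten} every principal subtensor $\ten{B}$ of $\ten{A}$ is again a P-tensor, and $\ten{B}$ is evidently symmetric as well, since symmetry of $\ten{A}$ is inherited by any principal subtensor. Next, Theorem~\ref{thm_heig} applied to $\ten{B}$ tells us that $\ten{B}$ has no non-positive H-eigenvalue at all; in particular $\ten{B}$ has no non-positive H-eigenvalue that admits a positive H-eigenvector. Since this holds for every principal subtensor of $\ten{A}$, the strict-copositivity half of the lemma immediately gives that $\ten{A}$ is strictly copositive.

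For the P$_0$-tensor case I would run the identical argument, replacing ``non-positive'' by ``negative'' throughout: Theorem~\ref{thm_subten} yields that each principal subtensor of a symmetric P$_0$-tensor is a symmetric P$_0$-tensor, Theorem~\ref{thm_heig} yields that such a subtensor has no negative H-eigenvalue and hence none with a positive H-eigenvector, and the copositivity half of the lemma then gives that $\ten{A}$ is copositive.

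I do not expect a genuine obstacle here; the statement is essentially bookkeeping that threads the three cited results together. The one point worth a word of care is the direction of the implication in the lemma: the lemma only asks that one rule out non-positive (resp. negative) H-eigenvalues \emph{that come with a positive H-eigenvector}, which is a strictly weaker requirement than what Theorems~\ref{thm_subten}--\ref{thm_heig} deliver (namely, no such eigenvalues of any kind), so the implications compose without friction. As a sanity check one may also observe that, specialized to even order, this proposition recovers via Proposition~\ref{prop_psd} the familiar fact that an even-order symmetric PSD tensor is copositive.
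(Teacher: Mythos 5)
Your argument is correct and is exactly the paper's own route: combine Theorem~\ref{thm_subten} (principal subtensors inherit the P- or P$_0$-property) with Theorem~\ref{thm_heig} (no nonpositive, resp.\ negative, H-eigenvalues) and then apply the Song--Qi characterization of (strict) copositivity. Your remark that the lemma only requires excluding such eigenvalues having a positive H-eigenvector, a weaker condition than what you actually establish, is the right point of care and matches the paper's reasoning.
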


Completely positive tensors, introduced by Qi et al. \cite{Xu14}, are defined by sums of nonnegative rank-one tensors. For several vectors ${\bf x}_k \in \mathbb{R}^{n_k}$ ($k=1,2,\dots,m$), their outer product ${\bf x}_1 \otimes {\bf x}_2 \otimes \dots \otimes {\bf x}_m$ is an $m^{\rm th}$-order rank-one tensor with
$$
({\bf x}_1 \otimes {\bf x}_2 \otimes \dots \otimes {\bf x}_m)_{i_1 i_2 \dots i_m} = ({\bf x}_1)_{i_1} ({\bf x}_2)_{i_2} \dots ({\bf x}_m)_{i_m},\ i_k = 1,2,\dots,n_k,\ k = 1,2,\dots,m.
$$
When ${\bf x} := {\bf x}_1 = {\bf x}_2 = \dots = {\bf x}_m$, we often abbreviate their outer product as ${\bf x}^{\otimes m}$. Then the (strongly) completely positive tensors are defined as follows.

\begin{definition}[Completely positive \cite{Xu14,LQ2015}]
  Let $\ten{A}$ be an $m^{\rm th}$-order $n$-dimensional symmetric tensor. Then $\ten{A}$ is called completely positive (CP) if it can be written into
  $$
  \ten{A} = \sum_{k=1}^r {\bf u}_k^{\otimes m} \quad
  \text{with } {\bf u}_k \in \mathbb{R}_+^n \text{ for all } k = 1,2,\dots,r;
  $$
  And $\ten{A}$ is called strongly completely positive (SCP) if ${\bf u}_1,{\bf u}_2,\dots,{\bf u}_r$ span the whole space $\mathbb{R}^n$.
\end{definition}

In the even-order case, it is obvious that a CP tensor must be PSD and an SCP tensor must be PD. Not surprisingly, this also holds that odd-order CP (SCP) tensors are symmetric P$_0$- (P-)tensors.

\begin{proposition}\label{prop_cp}
  A completely positive tensor is a P$_0$-tensor, and a strongly completely positive tensor is a P-tensor.
\end{proposition}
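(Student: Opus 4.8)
The plan is to work directly from a nonnegative rank-one decomposition $\ten{A} = \sum_{k=1}^r {\bf u}_k^{\otimes m}$ with ${\bf u}_k \in \mathbb{R}_+^n$. For any ${\bf x} \in \mathbb{R}^n$ one computes $\ten{A}{\bf x}^{m-1} = \sum_{k=1}^r ({\bf u}_k^\top {\bf x})^{m-1} {\bf u}_k$, hence
$$
x_i^{m-1}(\ten{A}{\bf x}^{m-1})_i = \sum_{k=1}^r ({\bf u}_k^\top {\bf x})^{m-1}\,({\bf u}_k)_i\, x_i^{m-1}, \qquad i = 1,2,\dots,n,
$$
and, summing over $i$, $\ten{A}{\bf x}^m = \sum_{k=1}^r ({\bf u}_k^\top {\bf x})^m$. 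I would then split the argument according to the parity of $m$.

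If $m$ is even, the last identity shows $\ten{A}{\bf x}^m \geq 0$ for all ${\bf x}$, so $\ten{A}$ is PSD; and if moreover ${\bf u}_1,\dots,{\bf u}_r$ span $\mathbb{R}^n$, then any ${\bf x} \neq {\bf 0}$ fails to be orthogonal to all of them, so some $({\bf u}_k^\top{\bf x})^m > 0$ and $\ten{A}{\bf x}^m > 0$, i.e.\ $\ten{A}$ is PD. Proposition \ref{prop_psd} then yields both conclusions at once in the even-order case.

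If $m$ is odd, then $m-1$ is even, so $x_i^{m-1} \geq 0$, $({\bf u}_k^\top{\bf x})^{m-1} \geq 0$, and $({\bf u}_k)_i \geq 0$; hence every summand in the displayed identity is nonnegative and $x_i^{m-1}(\ten{A}{\bf x}^{m-1})_i \geq 0$ for every $i$. For a CP tensor it then suffices, given ${\bf x} \neq {\bf 0}$, to take any $i \in {\rm supp}({\bf x})$ to satisfy \eqref{eq_p0ten}. For an SCP tensor I argue by contradiction: suppose $x_i^{m-1}(\ten{A}{\bf x}^{m-1})_i = 0$ for every $i$. Since the summands are nonnegative, $({\bf u}_k^\top{\bf x})^{m-1}({\bf u}_k)_i x_i^{m-1} = 0$ for all $k$ and all $i$; fixing $k$, if ${\bf u}_k^\top{\bf x} \neq 0$ this forces $({\bf u}_k)_i = 0$ whenever $x_i \neq 0$, so ${\rm supp}({\bf u}_k) \cap {\rm supp}({\bf x}) = \emptyset$ and therefore ${\bf u}_k^\top{\bf x} = \sum_i ({\bf u}_k)_i x_i = 0$, a contradiction; hence ${\bf u}_k^\top{\bf x} = 0$ for every $k$. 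As the ${\bf u}_k$ span $\mathbb{R}^n$, this gives ${\bf x} = {\bf 0}$, contrary to assumption, so some $i$ yields a strict inequality in \eqref{eq_pten} and $\ten{A}$ is a P-tensor.

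The two identities for $\ten{A}{\bf x}^{m-1}$ and $\ten{A}{\bf x}^m$ under a rank-one decomposition are routine. The one place that needs care is the odd-order SCP step: from term-by-term vanishing one must exclude the possibility that ${\bf u}_k^\top{\bf x} = 0$ while $({\bf u}_k)_i x_i^{m-1} > 0$ for some $i$, and it is exactly the sign condition ${\bf u}_k \geq {\bf 0}$ --- which makes disjointness of supports equivalent to orthogonality here --- that rules this out and lets the spanning hypothesis do its work.
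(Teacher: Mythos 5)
Your proof is correct and takes essentially the same route as the paper: the same rank-one expansion $\ten{A}{\bf x}^{m-1}=\sum_{k=1}^r({\bf u}_k^\top{\bf x})^{m-1}{\bf u}_k$, with the even-order case delegated to Proposition \ref{prop_psd} and the odd-order case settled by the nonnegativity of each summand. Your SCP step is phrased as a contradiction where the paper argues directly (from ${\bf u}_k^\top{\bf x}\neq 0$ it picks an index $i$ with $x_i\neq 0$ and $({\bf u}_k)_i>0$, forcing $(\ten{A}{\bf x}^{m-1})_i>0$), but the underlying mechanism is identical.
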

\begin{proof}
  We only need to prove the case when $m$ is odd. Let $\ten{A} = \sum_{k=1}^r {\bf u}_k^{\otimes m}$, where ${\bf u}_k \in \mathbb{R}_+^n$ for all $k = 1,2,\dots,r$. Hence, $\ten{A}$ is a CP tensor. For an arbitrary ${\bf x} \in \mathbb{R}^n$, we have
  $$
  \ten{A}{\bf x}^{m-1} = \sum_{k=1}^r {\bf u}_k ({\bf x}^\top {\bf u}_k)^{m-1}.
  $$
  Because $(m-1)$ is even and ${\bf u}_k$'s are nonnegative, the tensor-vector multiplication $\ten{A}{\bf x}^{m-1}$ must also be nonnegative. Thus $\ten{A}$ is a P$_0$-tensor.

  When ${\bf u}_1,{\bf u}_2,\dots,{\bf u}_r$ further span the whole space, for an arbitrary nonzero ${\bf x} \in \mathbb{R}^n$, there must be some ${\bf u}_k$ which is not orthogonal with ${\bf x}$, i.e., ${\bf x}^\top {\bf u}_k \neq 0$. Then there is some index $i$ such that $x_i \neq 0$ and $({\bf u}_k)_i > 0$. Note that $({\bf u}_k)_i > 0$ implies $(\ten{A}{\bf x}^{m-1})_i > 0$. Thereby, $\ten{A}$ is a P-tensor.
\end{proof}

Very recently, Luo and Qi \cite{LQ2015} generalized the doubly nonnegative matrices to higher-order doubly nonnegative tensors with all entries and all $H$-eigenvalues nonnegative. Doubly nonnegative matrices play an important role in the relaxations of some NP-hard optimization problems to nonnegative semidefinite programming. Luo and Qi showed a potential similar effect of doubly nonnegative tensors.

\begin{definition}[Doubly nonnegative]
  A symmetric tensor is called doubly nonnegative (DNN) if both its entries and H-eigenvalues are all nonnegative.
\end{definition}

The relationship between DNN tensors and P$_0$-tensors follows directly by Theorem \ref{thm_heig} and Proposition \ref{prop_psd}.

\begin{proposition}
  A symmetric nonnegative P$_0$-tensor is DNN, and an even-order DNN tensor is a P$_0$-tensor.
\end{proposition}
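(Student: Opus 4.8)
The plan is to obtain both implications directly from the spectral characterization in Theorem~\ref{thm_heig} and from Proposition~\ref{prop_psd}, so that essentially no fresh computation is required; the statement is really a bookkeeping corollary of what has already been established.

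For the first assertion, suppose $\ten{A}$ is a symmetric nonnegative P$_0$-tensor. To conclude that $\ten{A}$ is DNN I only need to check the two defining conditions: that all entries of $\ten{A}$ are nonnegative, and that all H-eigenvalues of $\ten{A}$ are nonnegative. The first is exactly the hypothesis ``nonnegative''. For the second, Theorem~\ref{thm_heig} says that a P$_0$-tensor has no negative H-eigenvalue; hence every H-eigenvalue of $\ten{A}$ is nonnegative, and (using that $\ten{A}$ is symmetric) $\ten{A}$ is DNN.

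For the second assertion, let $\ten{A}$ be an even-order DNN tensor, so that $\ten{A}$ is symmetric with all H-eigenvalues nonnegative. I would first upgrade this to positive semidefiniteness: for an even-order symmetric tensor, nonnegativity of all H-eigenvalues is equivalent to PSD (Qi's characterization in \cite{Qi05}, or equivalently the even-order symmetric equivalence between PSD tensors and P$_0$-tensors recalled earlier in this section). Once $\ten{A}$ is known to be PSD, Proposition~\ref{prop_psd} immediately yields that $\ten{A}$ is a P$_0$-tensor. The only step that is not entirely automatic — and hence the main (mild) obstacle — is this passage from ``all H-eigenvalues nonnegative'' to ``positive semidefinite''; it genuinely uses the even order (so that $\ten{A}{\bf x}^m$ is invariant under ${\bf x}\mapsto -{\bf x}$ and can be controlled by its H-eigenvalues on the unit sphere), which is precisely why the even-order hypothesis is needed here and cannot be dropped.
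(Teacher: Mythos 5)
Your proof is correct and follows essentially the same route as the paper, which simply notes that the proposition ``follows directly by Theorem~\ref{thm_heig} and Proposition~\ref{prop_psd}'': you use Theorem~\ref{thm_heig} for the first assertion and Proposition~\ref{prop_psd} for the second, with Qi's even-order characterization (nonnegative H-eigenvalues of a symmetric tensor $\Leftrightarrow$ PSD) supplying the intermediate step the paper leaves implicit. No gaps; your filling-in of that intermediate step is exactly what the paper's one-line justification presupposes.
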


It is worth pointing out that in the above proposition, the assertion in the second part fails for odd-order tensors, as the following counterexample illustrates.

\begin{example}
  Denote $\ten{A}$ as the symmetric nonnegative $3^{\rm rd}$-order $3$-dimensional tensor with
  \[
  \begin{split}
  &a_{111}=100,\ a_{222}=3,\ a_{333}=1, \\
  &a_{112}=a_{113}=a_{122}=a_{133}=1,\ a_{223}=3,\ a_{233}=2.5, \\
  &a_{123}=0.
  \end{split}
  \]
  Note that for any nonzero ${\bf x}\in \mathbb{R}^3$, $(\ten{A}{\bf x}^2)_1>0$ since the matrix $\ten{A}(1,:,:)$ is positive definite. This implies that all $H$-eigenvalues of $\ten{A}$ is positive. Thus $\ten{A}$ is a DNN tensor. However, by setting ${\bf y}=(0,1,-1)^\top$, it follows that $y_2^2(\ten{A}{\bf y}^2)_2=-0.5$ and $y_3^2(\ten{A}{\bf y}^2)_3=-1$. Thus, $\ten{A}$ is not a P$_0$-tensor by definition.
\end{example}

To conclude, the three existing extensions of positive definiteness to odd orders are more or less involving some extra restrictions: The definition of copositive tensors constrains the vectors in the nonnegative orthant, and CP tensors and DNN tensors must be nonnegative tensor themselves. By contrast, our extension, i.e., symmetric P-tensors, admits no redundant conditions and best corresponds the performance of even-order PSD tensors.


\section{Structured P- and P$_0$-Tensors}

In this section, we will discuss the relationship between P- or P$_0$-tensors and several important structured tensors. Our results show that the sets of P- and P$_0$-tensors contain a large amount of structured tensors, which is another motivation for us to investigate P- and P$_0$-tensors.

The first kind of structured tensors is H-tensor, whose definition is based on M-tensor. Thus we introduce the concept of M-tensors first. M-tensor is a generalization of famous M-matrix and was widely studied by Zhang et al. \cite{Zhang14} and Ding et al. \cite{Ding13}. Its original definition is as follows, where $\rho(\cdot)$ represents the spectral radius of a tensor, i.e., its largest eigenvalue in absolute value sense.

\begin{definition}[M-tensor]
  Let $\ten{A} = s\ten{I} - \ten{B}$ be an $m^{\rm th}$-order $n$-dimensional tensor, where $\ten{B}$ is nonnegative. If $s \geq \rho(\ten{B})$, then $\ten{A}$ is called an M-tensor. If $s > \rho(\ten{B})$, then $\ten{A}$ is called a nonsingular M-tensor.
\end{definition}

Given an $m^{\rm th}$-order $n$-dimensional tensor $\ten{A}$, the comparison tensor $\ten{M}(\ten{A})$ of $\ten{A}$ is defined by
$$
\big(\ten{M}(\ten{A})\big)_{i_1 i_2 \dots i_m} =
\left\{
\begin{array}{ll}
+|a_{i_1 i_2 \dots i_m}|, & \text{if } i_1 = i_2 = \dots = i_m, \\
-|a_{i_1 i_2 \dots i_m}|, & \text{otherwise}.
\end{array}
\right.
$$
The comparison tensor focuses on the absolute values of the tensor entries rather than their signs. H-tensor was first introduced by Ding et al. \cite{Ding13}, and also studied by Kannan et al. \cite{Berman15} and Li et al. \cite{Li2014}. 
We now introduce the definition of H-tensors.

\begin{definition}[H-tensor]
  Let $\ten{A}$ be an $m^{\rm th}$-order $n$-dimensional tensor. If $\ten{M}(\ten{A})$ is an M-tensor, then $\ten{A}$ is called an H-tensor. If $\ten{M}(\ten{A})$ is a nonsingular M-tensor, then $\ten{A}$ is called a nonsingular H-tensor.
\end{definition}

As we have proven, a P- or P$_0$-tensor owns all positive or nonnegative diagonal entries, respectively. Thus we focus on the H-tensors with all nonnegative diagonal entries. The following proposition explains the relationship of H-tensors and P-tensors.

\begin{proposition}\label{thm_hten}
  A nonsingular H-tensor with all positive diagonal entries is a P-tensor, and an H-tensor with all nonnegative diagonal entries is a P$_0$-tensor.
\end{proposition}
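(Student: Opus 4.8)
The plan is to reduce both assertions to a single ``row of maximal scaled modulus'' diagonal‑dominance estimate, whose engine is the \emph{semi‑positivity} characterization of nonsingular M‑tensors: a $Z$‑tensor $\ten{C}$ is a nonsingular M‑tensor if and only if there is a strictly positive vector ${\bf v}$ with $\ten{C}{\bf v}^{m-1} > {\bf 0}$ componentwise (see \cite{Ding13,Zhang14}). Since the comparison tensor $\ten{M}(\ten{A})$ always has nonpositive off‑diagonal entries, it is a $Z$‑tensor, so this tool applies directly whenever $\ten{M}(\ten{A})$ is a nonsingular M‑tensor.

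First I would treat the nonsingular H‑tensor case. Let $\ten{A}$ be a nonsingular H‑tensor with positive diagonal entries, so $\ten{M}(\ten{A})$ is a nonsingular M‑tensor; choose ${\bf v} > {\bf 0}$ with $\ten{M}(\ten{A}){\bf v}^{m-1} > {\bf 0}$, i.e. $a_{i\cdots i} v_i^{m-1} > \sum' |a_{i i_2\cdots i_m}| v_{i_2}\cdots v_{i_m}$ for every $i$, where $\sum'$ runs over $(i_2,\ldots,i_m) \neq (i,\ldots,i)$. Given a nonzero ${\bf x}$, set $t = \max_i |x_i|/v_i > 0$ and pick $i$ attaining the maximum, so $|x_i| = t v_i \neq 0$ and $|x_j| \leq t v_j$ for all $j$. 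Splitting $(\ten{A}{\bf x}^{m-1})_i$ into diagonal and off‑diagonal parts and multiplying by $x_i^{m-1}$, the diagonal contribution equals $a_{i\cdots i} x_i^{2(m-1)} = a_{i\cdots i}(tv_i)^{2(m-1)}$ (using that $2(m-1)$ is even), whereas the off‑diagonal part is bounded in absolute value by $|x_i|^{m-1}\sum' |a_{i i_2\cdots i_m}| |x_{i_2}|\cdots|x_{i_m}| \le (tv_i)^{m-1} t^{m-1}\sum' |a_{i i_2\cdots i_m}| v_{i_2}\cdots v_{i_m} < (tv_i)^{m-1} t^{m-1} a_{i\cdots i} v_i^{m-1} = a_{i\cdots i}(tv_i)^{2(m-1)}$. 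Hence $x_i^{m-1}(\ten{A}{\bf x}^{m-1})_i > 0$, and $\ten{A}$ is a P‑tensor.

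For the P$_0$ case I would run a perturbation argument on top of the first part. If $\ten{A}$ is an H‑tensor with nonnegative diagonal entries, then for $\epsilon > 0$ the tensor $\ten{A}_\epsilon := \ten{A} + \epsilon\ten{I}$ satisfies $\ten{M}(\ten{A}_\epsilon) = \ten{M}(\ten{A}) + \epsilon\ten{I}$, which turns the M‑tensor $\ten{M}(\ten{A}) = s\ten{I}-\ten{B}$ into the nonsingular M‑tensor $(s+\epsilon)\ten{I}-\ten{B}$, while all diagonal entries of $\ten{A}_\epsilon$ become positive; so $\ten{A}_\epsilon$ is a nonsingular H‑tensor with positive diagonal entries, hence a P‑tensor. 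Fixing a nonzero ${\bf x}$, for each $\epsilon>0$ there is an index $i(\epsilon)$ with $x_{i(\epsilon)} \neq 0$ and $x_{i(\epsilon)}^{m-1}(\ten{A}{\bf x}^{m-1})_{i(\epsilon)} + \epsilon x_{i(\epsilon)}^{2(m-1)} > 0$. By the pigeonhole principle some fixed index $i$ works along a sequence $\epsilon_k \downarrow 0$, and letting $k \to \infty$ yields $x_i \neq 0$ and $x_i^{m-1}(\ten{A}{\bf x}^{m-1})_i \geq 0$, so $\ten{A}$ is a P$_0$‑tensor.

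The substantive obstacle is essentially locating the right input: once the equivalence ``nonsingular M‑tensor $\Leftrightarrow$ admits ${\bf v}>{\bf 0}$ with $\ten{M}(\ten{A}){\bf v}^{m-1}>{\bf 0}$'' is invoked, the remaining work is the routine scaled‑modulus argument, recast for the homogeneous quantity $x_i^{m-1}(\ten{A}{\bf x}^{m-1})_i$. The only points requiring a little care are checking that the extremal index genuinely has $x_i \neq 0$ (immediate since $t>0$ and ${\bf v}>{\bf 0}$) and exploiting the even exponent to replace $x_i^{2(m-1)}$ by $(tv_i)^{2(m-1)}$; both are handled above and neither depends on the parity of $m$, so the proof covers odd‑order tensors as well.
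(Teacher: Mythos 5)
Your proof is correct, and it takes a genuinely different route from the paper's. The paper argues by contradiction: assuming $\ten{A}$ is not a P-tensor, it builds a nonnegative diagonal tensor $\ten{D}$ with $(\ten{A}+\ten{D}){\bf x}^{m-1}={\bf 0}$, so $\ten{A}+\ten{D}$ has a zero H-eigenvalue, and then uses the positive diagonal scaling $S$ making $\ten{M}(\ten{A})S^{m-1}$ strictly diagonally dominant to conclude $\ten{A}+\ten{D}$ is a nonsingular H-tensor, a contradiction. You use the same essential input (semi-positivity of nonsingular M-tensors, i.e.\ existence of ${\bf v}>{\bf 0}$ with $\ten{M}(\ten{A}){\bf v}^{m-1}>{\bf 0}$, which is exactly the paper's scaled strict diagonal dominance) but deploy it directly: at the index maximizing $|x_i|/v_i$ you verify $x_i^{m-1}(\ten{A}{\bf x}^{m-1})_i>0$ by a weighted-modulus estimate, with the even exponent $2(m-1)$ handling both parities of $m$. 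This buys two things: it produces the witnessing index constructively (and with $x_i\neq 0$ automatically), avoiding the paper's diagonal-tensor construction, which as written needs $x_i\neq 0$ whenever $(\ten{A}{\bf x}^{m-1})_i\neq 0$ and is therefore slightly delicate when ${\rm supp}({\bf x})\neq[n]$; and your perturbation-plus-pigeonhole argument gives an explicit proof of the P$_0$ half, which the paper's proof omits entirely. The paper's approach, in exchange, stays within its eigenvalue/diagonal-dominance toolkit and recycles the same scaling lemma it cites elsewhere, but logically the two proofs rest on the same characterization of nonsingular M-tensors.
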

\begin{proof}
  Let $\ten{A}$ be a nonsingular H-tensor with all positive diagonal entries. Then its comparison tensor $\ten{M}(\ten{A})$ is a nonsingular M-tensor. Assume that $\ten{A}$ is not a P-tensor, that is, there exists a nonzero vector ${\bf x}$ such that $x_i^{m-1} (\ten{A}{\bf x}^{m-1})_i \leq 0$ for all $i=1,2,\dots,n$. Hence, there is a diagonal matrix $D$ with all nonnegative diagonal entries such that
  $$
  \ten{A}{\bf x}^{m-1} = -D \cdot {\bf x}^{[m-1]}.
  $$
  Denote $\ten{D}$ as the diagonal tensor sharing the same diagonal entries with $D$. Then the above equation is rewritten into $(\ten{A}+\ten{D}){\bf x}^{m-1} = {\bf 0}$. That is, the tensor $\ten{A} + \ten{D}$ has a zero H-eigenvalue.

  Since all the diagonal entries of $\ten{A}$ are positive, the comparison tensor of $\ten{A}+\ten{D}$ is exactly $\ten{M}(\ten{A}) + \ten{D}$. Furthermore, because $\ten{M}(\ten{A})$ is a nonsingular M-tensor, these is a diagonal matrix $S$ with all positive diagonal entries such that $\ten{M}(\ten{A})S^{m-1} = \ten{M}(\ten{A}S^{m-1})$ is strictly diagonally dominant. Thus the tensor $\ten{M}(\ten{A})S^{m-1} + \ten{D}S^{m-1} = \ten{M}\big((\ten{A} + \ten{D})S^{m-1}\big)$ is also strictly diagonally dominant, which implies that $(\ten{A} + \ten{D})S^{m-1}$ is strictly diagonally dominant. This contradicts that $\ten{A} + \ten{D}$ has a zero H-eigenvalue. Thereby, $\ten{A}$ must be a P-tensor.
\end{proof}

A tensor $\ten{A}$ is called diagonally dominant if
$$
|a_{i i \dots i}| \geq \sum_{(i_2,\dots,i_m)\neq(i,\dots,i)} |a_{i i_2 \dots i_m}| \quad \text{for all }i=1,2,\dots,n,
$$
and called strictly diagonally dominant if
$$
|a_{i i \dots i}| > \sum_{(i_2,\dots,i_m)\neq(i,\dots,i)} |a_{i i_2 \dots i_m}| \quad \text{for all }i=1,2,\dots,n.
$$
Denote ${\bf 1} = [1,1,\dots,1]^\top$ as the all-one vector of size $n$. Then the definition of (strictly) diagonally dominant can be rewritten into $\ten{M}(\ten{A}){\bf 1}^{m-1} \geq (>) {\bf 0}$. By Theorems 3 and 16 in \cite{Ding13}, we know that $\ten{M}(\ten{A})$ is thus a (nonsingular) M-tensor, i.e., $\ten{A}$ is a (nonsingular) H-tensor. Therefore, we have the following corollary of Theorem \ref{thm_hten}.

\begin{corollary}\label{coro_dd}
  A strictly diagonally dominant tensor with all positive diagonal entries is a P-tensor, and a diagonally dominant tensor with all nonnegative diagonal entries is a P$_0$-tensor.
\end{corollary}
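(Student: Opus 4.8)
The plan is to obtain the corollary as an immediate consequence of Proposition~\ref{thm_hten}, by recognizing the diagonal-dominance hypotheses as instances of the H-tensor hypotheses used there. First I would record the elementary identity
$$
\big(\ten{M}(\ten{A}){\bf 1}^{m-1}\big)_i \;=\; |a_{ii\dots i}| - \sum_{(i_2,\dots,i_m)\neq(i,\dots,i)} |a_{ii_2\dots i_m}|,
$$
where ${\bf 1}=[1,\dots,1]^\top$; thus $\ten{A}$ is strictly diagonally dominant precisely when $\ten{M}(\ten{A}){\bf 1}^{m-1}>{\bf 0}$ and diagonally dominant precisely when $\ten{M}(\ten{A}){\bf 1}^{m-1}\geq{\bf 0}$. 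Since ${\bf 1}$ is a strictly positive vector, Theorems~3 and~16 of \cite{Ding13} then say that in the strict case $\ten{M}(\ten{A})$ is a nonsingular M-tensor, and in the non-strict case $\ten{M}(\ten{A})$ is an M-tensor; equivalently, $\ten{A}$ is a nonsingular H-tensor in the first case and an H-tensor in the second.

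Combining this with the diagonal-entry assumptions finishes the proof: a strictly diagonally dominant $\ten{A}$ with all positive diagonal entries is a nonsingular H-tensor with all positive diagonal entries, hence a P-tensor by Proposition~\ref{thm_hten}; a diagonally dominant $\ten{A}$ with all nonnegative diagonal entries is an H-tensor with all nonnegative diagonal entries, hence a P$_0$-tensor by Proposition~\ref{thm_hten}. The only point requiring a moment's attention is the degenerate subcase in which some $a_{ii\dots i}=0$: diagonal dominance of that row then forces every $a_{ii_2\dots i_m}=0$ as well, so the $i$-th row of $\ten{A}$ vanishes identically and the identity above still gives $(\ten{M}(\ten{A}){\bf 1}^{m-1})_i = 0 \geq 0$; the reduction is therefore unaffected. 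There is no substantial obstacle here, since the heavy lifting was already done in establishing Proposition~\ref{thm_hten}.

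As a cross-check --- and a route that avoids the H-tensor machinery altogether --- one can argue directly: given a nonzero ${\bf x}\in\mathbb{R}^n$, choose an index $i$ with $|x_i|=\max_j|x_j|>0$, so that $|x_{i_2}\cdots x_{i_m}|\leq|x_i|^{m-1}$ for every multi-index and $a_{ii\dots i}=|a_{ii\dots i}|$ by hypothesis; expanding $(\ten{A}{\bf x}^{m-1})_i$ into its diagonal and off-diagonal parts and using $x_i^{2(m-1)}=|x_i|^{2(m-1)}$ yields
$$
x_i^{m-1}(\ten{A}{\bf x}^{m-1})_i \;\geq\; |x_i|^{2(m-1)}\Big(|a_{ii\dots i}| - \sum_{(i_2,\dots,i_m)\neq(i,\dots,i)} |a_{ii_2\dots i_m}|\Big),
$$
which is strictly positive under strict diagonal dominance and nonnegative (with $x_i\neq 0$) under diagonal dominance. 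I would present the reduction to Proposition~\ref{thm_hten} as the main argument and mention this estimate only as a remark.
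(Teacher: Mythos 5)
Your main argument is exactly the paper's: rewrite (strict) diagonal dominance as $\ten{M}(\ten{A})\mathbf{1}^{m-1}\geq(>)\,\mathbf{0}$, invoke Theorems 3 and 16 of \cite{Ding13} to conclude $\ten{A}$ is a (nonsingular) H-tensor, and apply Proposition~\ref{thm_hten}; this is correct and matches the paper's reasoning. Your direct maximal-entry estimate is also valid and a nice self-contained alternative, but it is only a remark and does not change the assessment.
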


P$_0$-tensors also have some interesting examples from the spectral theory of uniform hypergraphs (see, e.g., \cite{Hu15,Qi2014,Shao15}). Actually, both the Laplacian tensor and the signless Laplacian tensor of a uniform hypergraph are P$_0$-tensors.

\begin{definition}[Laplacian tensor \cite{Qi2014}]\label{def_sgnlap}
  Let $G =(V, E)$ be an $m$-uniform hypergraph. The \emph{adjacency tensor} of $G$ is defined as the $m^{\rm th}$-order $n$-dimensional tensor $\ten{A}$ whose $(i_1, \ldots, i_m)$th entry is
  $$
  a_{i_1\ldots i_m}=\left\{
  \begin{array}{ll}
    \frac{1}{(m-1)!}, & \hbox{if~$\{i_1,\ldots, i_m\}\in E$;} \\
    0, & \hbox{otherwise.}
  \end{array}
  \right.$$
  Let $\mathcal{D}$ be an $m^{\rm th}$-order $n$-dimensional diagonal tensor with its diagonal element $d_{i\ldots i}$ being $d_i$, the degree of vertex $i$, for all $i\in [n]$. Then ${\mathcal{L}}:={\mathcal{D}}-\ten{A}$ and ${\mathcal{Q}}:={\mathcal{D}}+\ten{A}$ are called the \emph{Laplacian tensor} and \emph{signless Laplacian tensor} of the hypergraph $G$, respectively.
\end{definition}

For even-order hypergraphs, we already know that their (signless) Laplacian tensors are positive semidefinite. However, we do not have this parallel results for odd-order case. Now, we can character this property of both even and odd-order hypergraphs via the concept of P$_0$-tensors. Definition \ref{def_sgnlap} and Corollary \ref{coro_dd} indicate the following corollary.

\begin{corollary}
  The (signless) Laplacian tensor of a uniform hypergraph is a symmetric P$_0$-tensor.
\end{corollary}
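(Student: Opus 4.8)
The plan is to deduce the statement from Corollary~\ref{coro_dd}: I will show that both $\mathcal{L}=\mathcal{D}-\ten{A}$ and $\mathcal{Q}=\mathcal{D}+\ten{A}$ are diagonally dominant tensors whose diagonal entries are all nonnegative, and that they are symmetric; then the conclusion is immediate. Symmetry is the easy part: the adjacency tensor $\ten{A}$ is symmetric because its entry $a_{i_1\ldots i_m}$ depends only on the unordered set $\{i_1,\ldots,i_m\}$, and $\mathcal{D}$ is a diagonal tensor, so $\mathcal{D}\pm\ten{A}$ is symmetric. Likewise the diagonal entries of both $\mathcal{L}$ and $\mathcal{Q}$ equal the vertex degrees $d_i\ge 0$, so the "nonnegative diagonal" hypothesis of Corollary~\ref{coro_dd} holds.

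The substantive step is to compute the off-diagonal absolute row sums of $\ten{A}$. Fix a vertex $i$. An entry $a_{i i_2\ldots i_m}$ with $(i_2,\ldots,i_m)\neq(i,\ldots,i)$ is nonzero only when $i,i_2,\ldots,i_m$ are pairwise distinct and $e:=\{i,i_2,\ldots,i_m\}\in E$, in which case it equals $\tfrac{1}{(m-1)!}$. For each fixed edge $e\ni i$ there are exactly $(m-1)!$ tuples $(i_2,\ldots,i_m)$ listing the remaining $m-1$ vertices of $e$, each contributing $\tfrac{1}{(m-1)!}$; summing over the $d_i$ edges through $i$ gives
$$
\sum_{(i_2,\ldots,i_m)\neq(i,\ldots,i)} |a_{i i_2\ldots i_m}| \;=\; d_i .
$$
Hence for $\mathcal{L}$ we get $|l_{i\ldots i}| = d_i = \sum_{(i_2,\ldots,i_m)\neq(i,\ldots,i)}|l_{i i_2\ldots i_m}|$ for every $i$, so $\mathcal{L}$ is diagonally dominant with nonnegative diagonal; since the off-diagonal entries of $\mathcal{Q}$ are just $+a_{i i_2\ldots i_m}\ge 0$, the same identity holds for $\mathcal{Q}$. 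Corollary~\ref{coro_dd} then shows that $\mathcal{L}$ and $\mathcal{Q}$ are P$_0$-tensors, and with the symmetry observed above this finishes the proof.

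The main obstacle — though it is elementary — is exactly the counting identity above: one must use that in an $m$-uniform hypergraph no edge repeats a vertex (so non-distinct index tuples contribute zero) and that the normalization $\tfrac{1}{(m-1)!}$ is precisely calibrated to cancel the $(m-1)!$ orderings of the remaining vertices of an edge, which is what forces each edge through $i$ to contribute exactly $1$ to the row sum and makes $\mathcal{L},\mathcal{Q}$ diagonally dominant rather than merely "close to" it. Everything else in the argument is bookkeeping.
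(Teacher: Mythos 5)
Your proof is correct and follows exactly the route the paper intends: it deduces the result from Corollary~\ref{coro_dd} by checking that $\mathcal{L}$ and $\mathcal{Q}$ are symmetric, have nonnegative diagonal (the degrees $d_i$), and are diagonally dominant. The paper leaves the off-diagonal row-sum computation implicit, and your counting argument (each edge through $i$ contributes $(m-1)!$ tuples of weight $\tfrac{1}{(m-1)!}$, hence exactly $1$) is precisely the detail needed to justify it.
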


Cauchy tensor is another interesting structured tensor (see \cite{Haibin15}). Given a vector ${\bf u}\in \mathbb{R}^n$, we can construct an $m^{\rm th}$-order symmetric Cauchy tensor $\ten{C}$ as
$$
c_{i_1 i_2 \dots i_m} = \frac{1}{u_{i_1} + u_{i_2} + \dots + u_{i_m}},\quad \text{for all }i_k = 1,2,\dots,n,\ k = 1,2,\dots,m,
$$
where ${\bf u}$ is called the generating vector of the Cauchy tensor $\ten{C}$. Luo and Qi \cite{LQ2015} proved that a Cauchy tensor with a positive generating vector is CP and a Cauchy tensor with a mutually distinct positive generating vector is SCP. Then the following corollary of Proposition \ref{prop_cp} is apparent.

\begin{corollary}
  A Cauchy tensor with a positive generating vector is a P$_0$-tensor. Furthermore, when the entries of its generating vector are mutually distinct, it is a P-tensor.
\end{corollary}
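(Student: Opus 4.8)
The plan is to read the corollary as an immediate consequence of two facts already in hand. Luo and Qi's theorem (cited just above) states that a Cauchy tensor $\ten{C}$ with a positive generating vector ${\bf u}$ is completely positive, and that it is strongly completely positive when the entries $u_1,\dots,u_n$ are in addition mutually distinct. Since a Cauchy tensor is symmetric by construction, Proposition~\ref{prop_cp} applies verbatim: a completely positive tensor is a P$_0$-tensor, and a strongly completely positive tensor is a P-tensor. Concatenating these two implications yields both assertions.

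If one prefers a self-contained argument rather than invoking the CP/SCP classification, I would use the integral (generalized Vandermonde) representation of Cauchy tensors. For positive reals one has $\frac{1}{u_{i_1}+u_{i_2}+\dots+u_{i_m}} = \int_0^1 t^{\,u_{i_1}+u_{i_2}+\dots+u_{i_m}-1}\,dt$, so, writing ${\bf v}(t)=\big(t^{u_1},t^{u_2},\dots,t^{u_n}\big)^\top$, we obtain the continuous rank-one decomposition $\ten{C}=\int_0^1 {\bf v}(t)^{\otimes m}\,t^{-1}\,dt$, hence for any ${\bf x}\in\mathbb{R}^n$ and any index $i$,
$$
(\ten{C}{\bf x}^{m-1})_i = \int_0^1 v_i(t)\,\big({\bf x}^\top{\bf v}(t)\big)^{m-1}\,t^{-1}\,dt ,
$$
where $v_i(t)=t^{u_i}>0$ on $(0,1)$. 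When $m$ is even, $\ten{C}$ is PSD (its $m$-form is $\int_0^1 ({\bf x}^\top{\bf v}(t))^{m}t^{-1}dt\ge 0$), so it is a P$_0$-tensor by Proposition~\ref{prop_psd}; when $m$ is odd, $(m-1)$ is even, so the integrand is nonnegative for every $i$, giving $(\ten{C}{\bf x}^{m-1})_i\ge 0$ for all $i$ and hence $\ten{C}$ is a P$_0$-tensor by definition.

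For the P-tensor part, fix a nonzero ${\bf x}$ and assume the $u_i$ are mutually distinct. Then $t\mapsto {\bf x}^\top{\bf v}(t)=\sum_{i=1}^n x_i t^{u_i}$ is a nonzero Müntz-type exponential polynomial, so it cannot vanish on any subinterval of $(0,1)$; thus there is an open subinterval on which $\big({\bf x}^\top{\bf v}(t)\big)^{m-1}>0$ (using $m-1$ even, or treating the even-$m$ case via PSD $+$ strict positivity as above). Choosing any index $i$ with $x_i\ne 0$, the integrand $v_i(t)\big({\bf x}^\top{\bf v}(t)\big)^{m-1}t^{-1}$ is nonnegative on $(0,1)$ and strictly positive on that subinterval, whence $x_i\ne 0$ and $(\ten{C}{\bf x}^{m-1})_i>0$, so $\ten{C}$ is a P-tensor.

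The only step that is not a mechanical substitution is the passage from "mutually distinct generating entries" to the strict positivity exploited above, i.e. the fact that $\sum_i x_i t^{u_i}$ cannot be identically zero on an interval unless ${\bf x}={\bf 0}$ (equivalently, the nonsingularity of the relevant generalized Vandermonde matrices). In the short "cite Luo--Qi" route this content is precisely what is already packaged inside their SCP statement — the spanning property of the generating vectors of the decomposition — so along that route there is effectively no obstacle, and the two routes agree.
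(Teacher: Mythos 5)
Your first paragraph is exactly the paper's (unstated) proof: the corollary is presented as an immediate consequence of Luo and Qi's CP/SCP classification of Cauchy tensors with positive (respectively mutually distinct positive) generating vectors together with Proposition~\ref{prop_cp}, so your proposal is correct and takes essentially the same approach. Your additional self-contained integral-representation argument is also sound (it essentially re-derives the Luo--Qi facts), but it is not needed for agreement with the paper.
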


Recently, Song and Qi \cite{Song15a} introduced B- and B$_0$-tensors. In matrix case, we know that the set of B-matrices is a subset of the set of P-matrices. Nevertheless, Yuan and You \cite{Yuan14} gave a counterexample to show that a B-tensor is not necessarily a P-tensor for the case that the order is even and larger than $2$ under the Song-Qi's definition. Interestingly, we can prove that an odd-order B-tensor is still a P-tensor under our proposed definition. We first recall the definition of B- and B$_0$-tensors.

\begin{definition}[B-tensor \cite{QS2014}] \label{def_bten}
  An $m^{\rm th}$-order $n$-dimensional tensor $\ten{A}=\left(a_{i_1\ldots i_m}\right)$ is called a B$_0$-tensor if
  $$
  \sum\limits_{i_2,\ldots, i_m\in [n]}a_{ii_2\ldots i_m} \geq 0$$ and $$\frac{1}{n^{m-1}}\sum\limits_{i_2,\ldots,i_m\in [n]}a_{ii_2\ldots i_m}\geq a_{ij_2\ldots j_m},~\forall (j_2,\ldots,j_m)\neq (i,\ldots,i).
  $$
  If strict inequalities hold in the above inequalities, then $\ten{A}$ is called a B-tensor.
\end{definition}

We have the following theorem to construct other P-tensors from a given one in the odd-order case, which also implies the relationship between odd-order B-tensors and P-tensors.

\begin{theorem}
  Let $\ten{A}$ be an $m^{\rm th}$-order $n$-dimensional tensor, and $m$ is odd. If $\ten{A}$ is a P- or P$_0$-tensor and $\ten{C}$ is a completely positive tensor, then $\ten{A} + \ten{C}$ is also a P- or P$_0$-tensor, respectively. Particularly, the addition of a P- or P$_0$-tensor and a nonnegative diagonal tensor is also a P- or P$_0$-tensor, respectively.
\end{theorem}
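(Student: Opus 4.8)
The plan is to exploit the parity of $m$ together with the entrywise nonnegativity of $\ten{C}{\bf x}^{m-1}$ that already surfaced in the proof of Proposition~\ref{prop_cp}. Write $\ten{C} = \sum_{k=1}^r {\bf u}_k^{\otimes m}$ with each ${\bf u}_k \in \mathbb{R}_+^n$. Exactly as in that proof, for every ${\bf x}\in\mathbb{R}^n$ we have $\ten{C}{\bf x}^{m-1} = \sum_{k=1}^r {\bf u}_k ({\bf x}^\top {\bf u}_k)^{m-1}$; since $m$ is odd the exponent $m-1$ is even, so each scalar $({\bf x}^\top{\bf u}_k)^{m-1}\geq 0$ while each ${\bf u}_k\geq{\bf 0}$, and therefore $\ten{C}{\bf x}^{m-1}\geq{\bf 0}$ coordinatewise.

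Next I would fix an arbitrary nonzero ${\bf x}\in\mathbb{R}^n$ and transfer the P-property of $\ten{A}$ to $\ten{A}+\ten{C}$ at the \emph{same} coordinate. By definition there is an index $i$ with $x_i^{m-1}(\ten{A}{\bf x}^{m-1})_i > 0$; since $m-1$ is even, $x_i^{m-1}\geq 0$ for every coordinate, so this strict inequality forces $x_i\neq 0$, hence $x_i^{m-1}>0$ and $(\ten{A}{\bf x}^{m-1})_i>0$. Then $\big((\ten{A}+\ten{C}){\bf x}^{m-1}\big)_i = (\ten{A}{\bf x}^{m-1})_i + (\ten{C}{\bf x}^{m-1})_i \geq (\ten{A}{\bf x}^{m-1})_i > 0$, and multiplying by $x_i^{m-1}>0$ gives $x_i^{m-1}\big((\ten{A}+\ten{C}){\bf x}^{m-1}\big)_i>0$. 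As ${\bf x}$ was arbitrary, $\ten{A}+\ten{C}$ is a P-tensor. The P$_0$ case is identical with weak inequalities: the P$_0$-property of $\ten{A}$ supplies an $i$ with $x_i\neq 0$ and $x_i^{m-1}(\ten{A}{\bf x}^{m-1})_i\geq 0$, so $(\ten{A}{\bf x}^{m-1})_i\geq 0$, whence $\big((\ten{A}+\ten{C}){\bf x}^{m-1}\big)_i\geq 0$ and its product with $x_i^{m-1}>0$ is $\geq 0$.

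For the ``particularly'' clause I would note that a nonnegative diagonal tensor $\ten{D}$ with diagonal entries $d_i\geq 0$ is itself completely positive: because $m$ is odd the real $m$-th root $d_i^{1/m}\geq 0$ exists, and $\ten{D} = \sum_{i=1}^n d_i\,{\bf e}_i^{\otimes m} = \sum_{i=1}^n \big(d_i^{1/m}{\bf e}_i\big)^{\otimes m}$ with $d_i^{1/m}{\bf e}_i\in\mathbb{R}_+^n$, where ${\bf e}_i$ is the $i$-th standard basis vector; $\ten{D}$ is also symmetric, so it fits the definition of a CP tensor. The conclusion then follows from the first part.

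I do not anticipate a genuine obstacle; the one point that deserves attention is that the single coordinate $i$ witnessing the P- or P$_0$-property of $\ten{A}$ keeps working for $\ten{A}+\ten{C}$. This is precisely where the oddness of $m$ enters twice: it makes $\ten{C}{\bf x}^{m-1}$ entrywise nonnegative, so adding $\ten{C}$ cannot spoil the sign of the $i$-th entry, and it makes $x_i^{m-1}>0$ (rather than merely $\geq 0$) once $x_i\neq 0$, so multiplying back by $x_i^{m-1}$ preserves the (strict or weak) inequality.
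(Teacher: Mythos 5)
Your proposal is correct and follows essentially the same route as the paper: decompose $\ten{C}$ into nonnegative rank-one terms, use the evenness of $m-1$ to see that $\ten{C}{\bf x}^{m-1}$ is entrywise nonnegative, and keep the same witness index $i$ from the P- or P$_0$-property of $\ten{A}$; the diagonal case is likewise handled by exhibiting the diagonal tensor as completely positive. Your explicit absorption of the diagonal entries via $d_i^{1/m}{\bf e}_i$ is a slightly more careful rendering of the paper's decomposition $\ten{D}=\sum_k d_k({\bf e}_k)^{\otimes m}$, but it is the same idea.
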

\begin{proof}
  Denote $\ten{C} = \sum_{k=1}^r ({\bf u}_k)^m$, where ${\bf u}_k$'s are nonnegative vectors and
  $$
  ({\bf u}_k)^m = \underbrace{{\bf u}_k \otimes {\bf u}_k \otimes \dots \otimes {\bf u}_k}_m.
  $$
  Thus $(\ten{A}+\ten{C}){\bf x}^{m-1} = \ten{A}{\bf x}^{m-1} + \sum_{k=1}^r {\bf u}_k \left({\bf u}_k^\top {\bf x}\right)^{m-1}$. Since $m$ is an odd number, the second part $\sum_{k=1}^r {\bf u}_k\left({\bf u}_k^\top {\bf x}\right)^{m-1}$ must be a nonnegative vector. From the definition of P-tensors, if $\ten{A}$ is an odd-order P-tensor, then there must be some index $i$ such that $x_i \neq 0$ and $(\ten{A}{\bf x}^{m-1})_i > 0$. By the above discussion, we know that $((\ten{A}+\ten{C}){\bf x}^{m-1})_i$ is also positive, which implies that $\ten{A} + \ten{C}$ is also a P-tensor.

  Every diagonal tensor $\ten{D}$ with diagonal entries $d_1,d_2,\dots,d_n$ can be decomposed into
  $$
  \ten{D} = \sum_{k=1}^n d_k ({\bf e}_k)^m,
  $$
  where ${\bf e}_k$ is the $k$-th orthonormal column vector. So it is a complete positive tensor, and the results follow directly.
\end{proof}

\begin{corollary}
  An odd-order B-(B$_0$-)tensor is a P-(P$_0$-)tensor, and a symmetric even-order B-(B$_0$-)tensor is a P-(P$_0$-)tensor.
\end{corollary}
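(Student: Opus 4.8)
The plan is to split the corollary into the odd-order case, where no symmetry is needed, and the symmetric even-order case, because the estimate I will use for odd $m$ relies on $m-1$ being even, and the theorem immediately preceding the corollary (adding a completely positive tensor) is stated only for odd orders. For the symmetric even-order part there is essentially nothing new to do: a symmetric even-order B-tensor is positive definite and a symmetric even-order B$_0$-tensor is positive semidefinite by the results of Qi and Song \cite{QS2014}, so Proposition \ref{prop_psd} immediately gives that such a tensor is a P- or P$_0$-tensor, respectively. The counterexample of Yuan and You \cite{Yuan14} shows that symmetry cannot be dropped here, and neither the preceding theorem nor the estimate below is available for even $m$.

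For the odd-order case I would argue directly, with no decomposition and no symmetry. Fix a nonzero ${\bf x}\in\R^n$, put $t=\max_j|x_j|>0$, and choose an index $i$ with $|x_i|=t$. Set $\beta_i=\frac{1}{n^{m-1}}\sum_{i_2,\ldots,i_m}a_{ii_2\ldots i_m}$; then $\sum_{i_2,\ldots,i_m}(a_{ii_2\ldots i_m}-\beta_i)=0$, so $a_{ii\ldots i}-\beta_i=\sum_{(i_2,\ldots,i_m)\ne(i,\ldots,i)}(\beta_i-a_{ii_2\ldots i_m})=:c_i$, and the B$_0$-conditions are precisely $\beta_i\ge 0$ together with $\beta_i\ge a_{ii_2\ldots i_m}$ off the diagonal, which makes each summand of $c_i$ nonnegative, i.e.\ $c_i=\sum_{\rm off}|a_{ii_2\ldots i_m}-\beta_i|\ge 0$. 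Writing $a_{ii_2\ldots i_m}=\beta_i+(a_{ii_2\ldots i_m}-\beta_i)$ and separating the diagonal term gives
\[
(\ten{A}{\bf x}^{m-1})_i=\beta_i\,({\bf 1}^\top{\bf x})^{m-1}+c_i\,x_i^{m-1}-\sum_{\rm off}|a_{ii_2\ldots i_m}-\beta_i|\;x_{i_2}\cdots x_{i_m}.
\]
Since $m-1$ is even, $x_i^{m-1}=t^{m-1}$, $({\bf 1}^\top{\bf x})^{m-1}\ge 0$, and $x_{i_2}\cdots x_{i_m}\le|x_{i_2}|\cdots|x_{i_m}|\le t^{m-1}$; substituting these bounds, the $c_i$-terms cancel and $(\ten{A}{\bf x}^{m-1})_i\ge\beta_i({\bf 1}^\top{\bf x})^{m-1}\ge 0$. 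As $x_i\ne 0$, this already proves that an odd-order B$_0$-tensor is a P$_0$-tensor.

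For a strict B-tensor one has $\beta_i>0$ and $a_{ii_2\ldots i_m}<\beta_i$ strictly off the diagonal, and I would sharpen the last inequality to a strict one. If $(\ten{A}{\bf x}^{m-1})_i=0$, then every inequality in the chain above is an equality, so $\beta_i>0$ forces ${\bf 1}^\top{\bf x}=0$, and since now $|a_{ii_2\ldots i_m}-\beta_i|>0$ for every off-diagonal tuple, we must have $x_{i_2}\cdots x_{i_m}=t^{m-1}$ for all of them. Applying this to $(i_2,\ldots,i_m)=(j,i,\ldots,i)$ with $j\ne i$ and using that $m-2$ is odd forces $x_j=x_i$ for each $j$, i.e.\ ${\bf x}=x_i{\bf 1}$, which contradicts ${\bf 1}^\top{\bf x}=0$. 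Hence $(\ten{A}{\bf x}^{m-1})_i>0$, so $x_i^{m-1}(\ten{A}{\bf x}^{m-1})_i>0$ and $\ten{A}$ is a P-tensor. (The degenerate case $n=1$, with no off-diagonal indices, is immediate since there $\beta_i=a_{ii\ldots i}>0$.)

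The step I expect to be the main obstacle is exactly this sharpening: the crude estimate only delivers the lower bound $\beta_i({\bf 1}^\top{\bf x})^{m-1}$, which vanishes on the hyperplane ${\bf 1}^\top{\bf x}=0$, and on that hyperplane the bound $|x_{i_2}|\cdots|x_{i_m}|\le t^{m-1}$ is attained for the extremal vectors ${\bf x}\in\{-t,t\}^n$; ruling those out is what forces one to use the full strength of strict off-diagonal dominance together with the odd parity of $m$. An alternative to the whole odd-order argument would be to invoke the Qi--Song structural decomposition of a B-tensor (resp.\ B$_0$-tensor) as $\ten{M}+\sum_k h_k\big(\sum_{j\in J_k}{\bf e}_j\big)^{\otimes m}$ with $h_k>0$, where $\ten{M}$ is a strictly (resp.\ weakly) diagonally dominant tensor with positive (resp.\ nonnegative) diagonal entries, so that $\ten{M}$ is a P- (resp.\ P$_0$-)tensor by Corollary \ref{coro_dd}, the remaining sum is completely positive, and the preceding theorem applies; there the obstacle is instead to confirm that the decomposition really yields a \emph{strictly} diagonally dominant remainder in the B-tensor case and that it persists without symmetry.
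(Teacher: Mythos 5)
Your proof is correct, but for the odd-order part it takes a genuinely different route from the paper. The paper disposes of the odd-order case structurally: it invokes the decomposition (from \cite{Song15a,Yuan14}) of a B-tensor as $\ten{M}+\ten{C}$ with $\ten{M}$ a strictly diagonally dominant (nonsingular M-)tensor and $\ten{C}$ a sum of partially all-one tensors, so that $\ten{M}$ is a P-tensor by Corollary \ref{coro_dd} and $\ten{C}$ is completely positive, and then applies the theorem immediately preceding the corollary on adding a CP tensor to an odd-order P-tensor. Your argument instead works directly from Definition \ref{def_bten}: picking an index of maximal modulus, splitting $a_{ii_2\ldots i_m}=\beta_i+(a_{ii_2\ldots i_m}-\beta_i)$, and using that $m-1$ is even to get $(\ten{A}{\bf x}^{m-1})_i\ge\beta_i({\bf 1}^\top{\bf x})^{m-1}\ge 0$, with a careful equality analysis (forcing ${\bf x}$ proportional to ${\bf 1}$ on the hyperplane ${\bf 1}^\top{\bf x}=0$, a contradiction) to upgrade this to strict positivity in the B-tensor case; the checks of the equality case, including the $n=1$ degeneracy, are sound. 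What your route buys is self-containedness and uniformity: it treats the B- and B$_0$-cases by one estimate, needs no symmetry, and avoids the decomposition theorem, which in the cited sources is stated for symmetric B-tensors, so it arguably covers the nonsymmetric odd-order B$_0$-case more transparently than the paper's one-line appeal to the decomposition; the paper's route is shorter and reuses machinery already built in the section. For the symmetric even-order part you do exactly what the paper does, citing \cite{QS2014} and Proposition \ref{prop_psd}.
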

\begin{proof} For the first part, let $\ten{A}$ be an odd-order B-tensor. Song and Qi \cite{Song15a} and Yuan and You \cite{Yuan14} proved that a B-tensor can always be decomposed into
  $$
  \ten{A} = \ten{M} + \ten{C},
  $$
  where $\ten{M}$ is a strictly diagonally dominant M-tensor and $\ten{C}$ is the summation of several partially all-one tensors. Since a nonsingular M-tensor is a P-tensor and a partially all-one tensor is completely positive, thus a B-tensor $\ten{A}$ is also a P-tensor. The second part follows readily from \cite{QS2014} and Proposition \ref{prop_psd}.
\end{proof}


\section{P- and P$_0$-Tensor Cones}


For simplicity, we use the following notations to denote different tensor cones. The orders and the dimensions of the involving tensors are $m$ and $n$, respectively, by default.
\begin{center}
\begin{tabular}{r|l}
  \hline
  \hline
  Abbreviations & Full explanations of the cones \\
  \hline
  $T_{m,n}$ & square tensors \\
  $S_{m,n}$ & symmetric tensors \\
  $P(m,n)$ & P-tensors \\
  $P_0(m,n)$ & P$_0$-tensors \\
  $CP_{m,n}$ & completely positive tensors \\
  $SCP_{m,n}$ & strongly completely positive tensors \\
  $NSPSD_{m,n}$ & positive semidefinite tensors \\
  $NSPD_{m,n}$ & positive definite tensors \\
  $PSD_{m,n}$ & symmetric positive semidefinite tensors \\
  $PD_{m,n}$ & symmetric positive definite tensors \\
  $Z_{m,n}$ & Z-tensors \\
  $M_{m,n}$ & M-tensors \\
  $SM_{m,n}$ & nonsingular M-tensors \\
  \hline
  \hline
\end{tabular}
\end{center}

The following theorem explains that $P_0(m,n)$ is the closure of $P(m,n)$, which is the same relationship with $CP_{m,n}$ and $SCP_{m,n}$, $PSD_{m,n}$ and $PD_{m,n}$, $M_{m,n}$ and $SM_{m,n}$, etc. Therefore, when we come to some results about $P(m,n)$ and those results can be preserved by taking limits, thus those results can also holds for $P_0(m,n)$.

\begin{theorem}
  ${\rm int}\big(P_0(m,n)\big) = P(m,n)$, and thus ${\rm cl}\big(P(m,n)\big) = P_0(m,n)$, where ${\rm int}(\cdot)$ and ${\rm cl}(\cdot)$ denote the interior and the closure of the corresponding set, respectively.
\end{theorem}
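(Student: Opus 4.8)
The plan is to deduce both identities from four auxiliary facts: (i) $P(m,n)\subseteq P_0(m,n)$; (ii) $P(m,n)$ is open; (iii) $P_0(m,n)$ is closed; and (iv) adding or subtracting a small positive multiple of the identity tensor $\ten{I}$ crosses the boundary in the expected way, i.e.\ $\ten{A}+\epsilon\ten{I}\in P(m,n)$ for every $\epsilon>0$ whenever $\ten{A}\in P_0(m,n)$, while $\ten{A}-\epsilon\ten{I}\notin P_0(m,n)$ for every $\epsilon>0$ whenever $\ten{A}\notin P(m,n)$. Granting these, the theorem assembles quickly. Facts (i) and (ii) give $P(m,n)={\rm int}\big(P(m,n)\big)\subseteq{\rm int}\big(P_0(m,n)\big)$; the second half of (iv) shows that any $\ten{A}\notin P(m,n)$ is a limit of tensors outside $P_0(m,n)$ and therefore lies outside ${\rm int}\big(P_0(m,n)\big)$, so ${\rm int}\big(P_0(m,n)\big)=P(m,n)$. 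Then (i) and (iii) give ${\rm cl}\big(P(m,n)\big)\subseteq{\rm cl}\big(P_0(m,n)\big)=P_0(m,n)$, while the first half of (iv) writes every $\ten{A}\in P_0(m,n)$ as $\lim_{\epsilon\to 0^+}(\ten{A}+\epsilon\ten{I})$ with $\ten{A}+\epsilon\ten{I}\in P(m,n)$, whence ${\rm cl}\big(P(m,n)\big)=P_0(m,n)$.

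Fact (i) is immediate from Definition \ref{def_pten}, since $x_i^{m-1}(\ten{A}{\bf x}^{m-1})_i>0$ already forces $x_i\neq0$. For fact (iv) the only computation needed is $(\ten{I}{\bf x}^{m-1})_i=x_i^{m-1}$, which yields $x_i^{m-1}\big((\ten{A}\pm\epsilon\ten{I}){\bf x}^{m-1}\big)_i=x_i^{m-1}(\ten{A}{\bf x}^{m-1})_i\pm\epsilon\,x_i^{2(m-1)}$; here the exponent $2(m-1)$ is even, so $x_i^{2(m-1)}>0$ precisely for $i\in{\rm supp}({\bf x})$, and this is exactly where the parity of $m$ stops mattering. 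If $\ten{A}\in P_0(m,n)$ and ${\bf x}\neq{\bf 0}$, I pick $i\in{\rm supp}({\bf x})$ with $x_i^{m-1}(\ten{A}{\bf x}^{m-1})_i\geq0$ and obtain $x_i^{m-1}\big((\ten{A}+\epsilon\ten{I}){\bf x}^{m-1}\big)_i\geq\epsilon x_i^{2(m-1)}>0$. If instead $\ten{A}\notin P(m,n)$, I fix a nonzero ${\bf x}$ with $x_i^{m-1}(\ten{A}{\bf x}^{m-1})_i\leq0$ for all $i$, and get $x_i^{m-1}\big((\ten{A}-\epsilon\ten{I}){\bf x}^{m-1}\big)_i<0$ for all $i\in{\rm supp}({\bf x})$ while $x_i=0$ for $i\notin{\rm supp}({\bf x})$, so $\ten{A}-\epsilon\ten{I}$ violates the P$_0$ condition at ${\bf x}$. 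Fact (iii) I would get by writing $P_0(m,n)=\bigcap_{{\bf x}\neq{\bf 0}}\ \bigcup_{i\in{\rm supp}({\bf x})}\{\ten{A}:x_i^{m-1}(\ten{A}{\bf x}^{m-1})_i\geq0\}$: for fixed ${\bf x}$ and $i$ the map $\ten{A}\mapsto(\ten{A}{\bf x}^{m-1})_i$ is linear, so each inner set is a closed half-space, the inner union is a finite union of closed sets, and the outer intersection of closed sets is closed. (Equivalently, a pigeonhole-plus-limit argument over the finitely many indices works.)

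The one step that genuinely needs compactness — and the main obstacle — is fact (ii). I would use that $x_i^{m-1}(\ten{A}{\bf x}^{m-1})_i$ is homogeneous of positive degree $2(m-1)$ in ${\bf x}$, so that $\ten{A}\in P(m,n)$ if and only if $g(\ten{A}):=\min_{{\bf x}\in S^{n-1}}\max_{i\in[n]}x_i^{m-1}(\ten{A}{\bf x}^{m-1})_i>0$, where $S^{n-1}$ is the unit sphere in $\R^n$ and the minimum is attained because $S^{n-1}$ is compact and ${\bf x}\mapsto\max_i x_i^{m-1}(\ten{A}{\bf x}^{m-1})_i$ is continuous. The function $g$ is continuous on the finite-dimensional space of tensors: concretely $\big|x_i^{m-1}\big((\ten{A}-\ten{B}){\bf x}^{m-1}\big)_i\big|\leq C\|\ten{A}-\ten{B}\|$ uniformly over ${\bf x}\in S^{n-1}$ for a constant $C=C(m,n)$, hence $|g(\ten{A})-g(\ten{B})|\leq C\|\ten{A}-\ten{B}\|$. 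Therefore $P(m,n)=g^{-1}\big((0,\infty)\big)$ is open, and in fact the open ball of radius $g(\ten{A})/C$ around any $\ten{A}\in P(m,n)$ stays inside $P(m,n)$. Combined with the bookkeeping of the first paragraph, this completes the proof.
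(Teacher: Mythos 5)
Your proposal is correct, and it is in fact more thorough than the paper's own argument, which proceeds differently. The paper's proof only treats the closure identity directly: it notes that $\ten{A}+\epsilon\ten{I}$ is a P-tensor whenever $\ten{A}$ is a P$_0$-tensor (giving $P_0(m,n)\subseteq{\rm cl}\big(P(m,n)\big)$, the same perturbation you use, though the paper does not display the computation), and it shows ${\rm cl}\big(P(m,n)\big)\subseteq P_0(m,n)$ by the pigeonhole-plus-limit argument over the finitely many indices that you mention only as an aside; the interior identity ${\rm int}\big(P_0(m,n)\big)=P(m,n)$ is stated but not actually derived there, and note that ${\rm cl}\big(P(m,n)\big)=P_0(m,n)$ alone does not imply it. Your proof supplies exactly the missing ingredients: the openness of $P(m,n)$ via the Lipschitz minimax function $g(\ten{A})=\min_{{\bf x}\in S^{n-1}}\max_i x_i^{m-1}(\ten{A}{\bf x}^{m-1})_i$ on the compact unit sphere, the closedness of $P_0(m,n)$ as an intersection of finite unions of closed half-spaces, and the second perturbation $\ten{A}-\epsilon\ten{I}\notin P_0(m,n)$ for $\ten{A}\notin P(m,n)$, which shows non-P-tensors cannot be interior points of $P_0(m,n)$. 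What the paper's route buys is brevity for the closure statement; what yours buys is a complete proof of both assertions, plus the quantitative bonus that the ball of radius $g(\ten{A})/C$ around any P-tensor stays inside $P(m,n)$.
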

\begin{proof}
  On one hand, if $\ten{A}$ is a P$_0$-tensor, then $\ten{A} + \epsilon\ten{I}$ is a P-tensor for all $\epsilon > 0$. Thus each P$_0$-tensor must be the limit of a sequence of P-tensors. On the other hand, if $\ten{A}$ is the limit of a sequence of P-tensors $\{\ten{A}_k\}_{k=1}^\infty$, then we shall prove that $\ten{A}$ must be a P$_0$-tensor. Since each $\ten{A}_k$ is a P-tensor, for an arbitrary ${\bf x} \in \mathbb{R}^n$ there is an index $i_k$ such that $x_{i_k}^{m-1} (\ten{A}_k{\bf x}^{m-1})_{i_k} > 0$. Note that the series $\{i_k\}_{k=1}^\infty$ must admit a convergent subseries $\{i_{k_j}\}_{j=1}^\infty$. Thus there is an index $i = \lim_{j\to\infty} i_{k_j}$ such that $x_i^{m-1} (\ten{A}{\bf x}^{m-1})_i \geq 0$ with $x_i \neq 0$. That is, $\ten{A}$ is a P$_0$-tensor.
\end{proof}

It is known that $P_0(m,n)$ is not convex. Counterexamples can be found in \cite{WXH2010} for the case of matrices. For general high order tensors, the following proposition can also indicate the nonconvexity of $P_0(m,n)$.

\begin{proposition}\label{prop_basis}
  For any $j\in [m]$ and any $i_j\in [n]$, the tensor ${\bf e}^{(i_1)}\otimes {\bf e}^{(i_2)}\otimes \cdots \otimes {\bf e}^{(i_m)}$ is a P$_0$-tensor; For any $j\in [m]$ and any $i_j\in [n]$, if $\delta_{i_1 i_2 \cdots i_m}=0$, then $-{\bf e}^{(i_1)}\otimes \cdots \otimes {\bf e}^{(i_m)}$ is a P$_0$-tensor.
\end{proposition}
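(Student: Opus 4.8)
The plan is to exploit the extreme sparsity of the tensor--vector product of a rank-one ``coordinate'' tensor. Write $\ten{E}={\bf e}^{(i_1)}\otimes{\bf e}^{(i_2)}\otimes\cdots\otimes{\bf e}^{(i_m)}$. The first step is the one-line computation, valid for every ${\bf x}\in\R^n$,
\[
(\ten{E}{\bf x}^{m-1})_j=\begin{cases}
x_{i_2}x_{i_3}\cdots x_{i_m}, & j=i_1,\\
0, & j\neq i_1,
\end{cases}
\]
and $-\ten{E}{\bf x}^{m-1}$ is its negative. So $\pm\ten{E}{\bf x}^{m-1}$ lives entirely on the single axis $i_1$, and checking the P$_0$-condition of Definition~\ref{def_pten} reduces to examining which coordinates of ${\bf x}$ are nonzero.

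Next I would split on ${\rm supp}({\bf x})$ for a fixed nonzero ${\bf x}$. If ${\bf x}$ has a nonzero coordinate $j\neq i_1$, then $x_j^{m-1}(\pm\ten{E}{\bf x}^{m-1})_j=x_j^{m-1}\cdot 0=0\geq 0$, so the P$_0$-condition already holds at $j$; this disposes of both $\ten{E}$ and $-\ten{E}$ simultaneously. The only remaining case is ${\bf x}=c\,{\bf e}^{(i_1)}$ with $c\neq 0$, where the sole admissible index is $i_1$ and the relevant quantity is $x_{i_1}^{m-1}(\pm\ten{E}{\bf x}^{m-1})_{i_1}=\pm\,c^{m-1}\,x_{i_2}\cdots x_{i_m}$.

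Finally I would settle this last case. For $\ten{E}$: if $i_1=i_2=\cdots=i_m$ the quantity is $c^{2(m-1)}>0$; otherwise some $i_k$ with $k\geq 2$ differs from $i_1$, so $x_{i_k}=0$ and the quantity is $0$ --- either way $\geq 0$, which proves the first assertion (note it needs no hypothesis on the multi-index). For $-\ten{E}$, the assumption $\delta_{i_1i_2\cdots i_m}=0$ means the multi-index is not constant, which forces some $i_k$ with $k\geq 2$ to differ from $i_1$; hence $x_{i_k}=0$, the quantity is $0\geq 0$, and the second assertion follows. There is no genuine obstacle here: the entire content is the observation that $\ten{E}$ sends every vector onto one axis, so the P$_0$-condition is only ever at risk on vectors supported at that axis, where the product is a perfect square or zero. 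The one point to watch is that $\delta_{i_1i_2\cdots i_m}=0$ is precisely the hypothesis ruling out the bad configuration (all indices equal, say to $\ell$, for which ${\bf x}={\bf e}^{(\ell)}$ gives $x_\ell^{m-1}(-\ten{E}{\bf x}^{m-1})_\ell=-1<0$), so the assumption is also sharp.
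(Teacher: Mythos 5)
Your proof is correct and follows essentially the same route as the paper's: compute that $\ten{E}{\bf x}^{m-1}$ is supported only on coordinate $i_1$, then split on whether ${\bf x}$ has a nonzero entry off $i_1$ (giving a zero, hence admissible, product there) or ${\bf x}=c\,{\bf e}^{(i_1)}$ (giving $c^{2(m-1)}\geq 0$, or $0$ when some $i_k\neq i_1$). The only difference is cosmetic: the paper dispatches the $-\ten{E}$ case with ``similarly,'' whereas you spell it out and add the (correct, though not required) observation that $\delta_{i_1\cdots i_m}=0$ is sharp.
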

\begin{proof}
  For any given $j = 1,2,\dots,m$ and any $i_j = 1,2,\dots,n$, denote $\ten{A}:={\bf e}^{(i_1)}\otimes \cdots \otimes {\bf e}^{(i_m)}$ as the rank-one $m^{\rm th}$-order $n$-dimensional tensor. For any nonzero ${\bf x}\in \mathbb{R}^n$, it follows readily that for any $i = 1,2,\dots,m$,
  \begin{equation}\label{eq_b1}
  x_i^{m-1} (\ten{A}{\bf x}^{m-1})_i =
  \left\{\begin{array}{ll}
  x_{i_1}^{m-1}x_{i_2}\cdots x_{i_m}, & \hbox{if $i=i_1$;} \\
  0, & \hbox{otherwise.}
  \end{array}\right.
  \end{equation}
  Denote ${\rm supp}({\bf x}):=\{i:\, x_i\neq 0\}$.
  \begin{description}
    \item[\bf Case 1] If $i_1 \neq {\rm supp}({\bf x})$, that is, there exists some $k\neq i_1$ such that $x_k\neq 0$. By the equation \eqref{eq_b1}, we have $x_k^{m-1} \ten{A} ({\bf x}^{m-1})_k=0$.
    \item[\bf Case 2] If $i_1 = {\rm supp}({\bf x})$, by setting $k=i_1$, the equation \eqref{eq_b1} implies that
        $$
        x_k^{m-1} (\ten{A}{\bf x}^{m-1})_k =
        \left\{\begin{array}{ll}
        x_{k}^{2(m-1)}, & \hbox{if $\delta_{k i_2\ldots i_m}=1$;} \\
        0, & \hbox{otherwise.}
        \end{array}\right.
        $$
  \end{description}
  Thus, for both the cases, we can always find some index $k = 1,2,\dots,n$ such that $x_k\neq 0$ and  $x_k^{m-1} (\ten{A}{\bf x}^{m-1})_k\geq 0$. By Definition \ref{def_pten}, $\ten{A}$ is a P$_0$-tensor. Similarly, we can show the remaining part.
\end{proof}

For any set $C$, conv$(C)$ stands for the convex hull of $C$ and cone$(C)$ for the convex cone generated by $C$. As an extension of Proposition 4.1 in \cite{WXH2010}, we have the following property for P$_0$-tensor cone by invoking Proposition \ref{prop_basis}.

\begin{proposition}\label{convex-hull} conv$(P_0(m,n))$=cone$(P_0(m,n))=\{\ten{A}\in T_{m,n}: {\rm diag}(\ten{A})\geq {\bf 0}\}$.
\end{proposition}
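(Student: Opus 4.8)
The plan is to prove the two equalities by a double inclusion argument, exploiting the fact (Proposition \ref{prop_basis}) that every rank-one tensor of the form $\pm{\bf e}^{(i_1)}\otimes\cdots\otimes{\bf e}^{(i_m)}$ with zero diagonal, and every rank-one tensor ${\bf e}^{(i_1)}\otimes\cdots\otimes{\bf e}^{(i_m)}$, lies in $P_0(m,n)$. First I would observe the trivial chain $P_0(m,n)\subseteq{\rm conv}(P_0(m,n))\subseteq{\rm cone}(P_0(m,n))$, and that every element of ${\rm cone}(P_0(m,n))$ has nonnegative diagonal: this is because each P$_0$-tensor has nonnegative diagonal entries (a consequence of Theorem \ref{thm_subten}, since each diagonal entry is a smallest principal subtensor), and the set $\{\ten{A}\in T_{m,n}:{\rm diag}(\ten{A})\geq{\bf 0}\}$ is itself a convex cone, hence closed under the nonnegative-combination operation generating ${\rm cone}(P_0(m,n))$. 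This gives the inclusion $\subseteq$ for both hulls at once.

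The substance is the reverse inclusion: every $\ten{A}\in T_{m,n}$ with ${\rm diag}(\ten{A})\geq{\bf 0}$ must lie in ${\rm conv}(P_0(m,n))$ (and then automatically in ${\rm cone}(P_0(m,n))$, since ${\rm conv}\subseteq{\rm cone}$). The idea is to write such an $\ten{A}$ as a nonnegative combination of the rank-one building blocks from Proposition \ref{prop_basis}. Decompose $\ten{A}=\sum_{i_1,\dots,i_m} a_{i_1\cdots i_m}\,{\bf e}^{(i_1)}\otimes\cdots\otimes{\bf e}^{(i_m)}$. For a diagonal index $(i,\dots,i)$, the coefficient $a_{i\cdots i}\geq 0$ multiplies the tensor ${\bf e}^{(i)}\otimes\cdots\otimes{\bf e}^{(i)}$, which is a P$_0$-tensor; keep this term as is. For an off-diagonal index $(i_1,\dots,i_m)$ with $\delta_{i_1\cdots i_m}=0$, both ${\bf e}^{(i_1)}\otimes\cdots\otimes{\bf e}^{(i_m)}$ and its negative are P$_0$-tensors by Proposition \ref{prop_basis}, so $a_{i_1\cdots i_m}\,{\bf e}^{(i_1)}\otimes\cdots\otimes{\bf e}^{(i_m)}$ — whatever the sign of $a_{i_1\cdots i_m}$ — is a nonnegative multiple of a P$_0$-tensor. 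Hence $\ten{A}$ is a nonnegative combination of P$_0$-tensors, i.e. $\ten{A}\in{\rm cone}(P_0(m,n))$.

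To land inside the \emph{convex} hull rather than merely the cone, I would scale: ${\rm conv}(C)$ for a cone-like family can be reached by noting that if $\ten{A}=\sum_k \lambda_k \ten{B}_k$ with $\lambda_k\geq 0$ and $\ten{B}_k\in P_0(m,n)$, then for $t>0$ the tensor $t\ten{A}=\sum_k (t\lambda_k)\ten{B}_k$; choosing the scalars so they sum to $1$ by absorbing them into the $\ten{B}_k$ (each $\lambda_k\ten{B}_k$ is again in $P_0(m,n)$, as $P_0(m,n)$ is clearly closed under multiplication by nonnegative scalars — scaling ${\bf x}$ in Definition \ref{def_pten} shows this), we get $\ten{A}=\sum_k (\lambda_k/\sum_j\lambda_j)\,(\,(\sum_j\lambda_j)\ten{B}_k\,)$, a genuine convex combination, provided not all $\lambda_k$ vanish; the case $\ten{A}=0$ is handled separately (it is a P$_0$-tensor itself, vacuously, or a trivial convex combination). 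Combining, ${\rm conv}(P_0(m,n))\supseteq\{\ten{A}:{\rm diag}(\ten{A})\geq{\bf 0}\}$, and with the first paragraph all three sets coincide.

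The main obstacle is bookkeeping rather than depth: one must be careful that Proposition \ref{prop_basis} supplies \emph{both signs} only for the zero-diagonal rank-one tensors, so the decomposition must route all diagonal mass through the positive-coefficient blocks, which is exactly where the hypothesis ${\rm diag}(\ten{A})\geq{\bf 0}$ is used and cannot be dispensed with. A secondary point to check is the normalization step turning a conic combination into a convex one without leaving $P_0(m,n)$, which relies only on closure of $P_0(m,n)$ under nonnegative scaling — immediate from the homogeneity of \eqref{eq_p0ten} in ${\bf x}$ and the linearity of $\ten{A}\mapsto\ten{A}{\bf x}^{m-1}$ in $\ten{A}$.
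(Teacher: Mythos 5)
Your proposal is correct and follows essentially the same route as the paper: both directions rest on the rank-one building blocks $\pm{\bf e}^{(i_1)}\otimes\cdots\otimes{\bf e}^{(i_m)}$ from Proposition \ref{prop_basis} for the inclusion $\supseteq$, and on the nonnegativity of diagonal entries of P$_0$-tensors (Theorem \ref{thm_subten}) together with the convex-cone structure of $\{\ten{A}:{\rm diag}(\ten{A})\geq{\bf 0}\}$ for the inclusion $\subseteq$. The only difference is cosmetic: you spell out the normalization turning the conic combination into a convex one, which the paper leaves implicit via the identity ${\rm conv}(P_0(m,n))={\rm cone}(P_0(m,n))$ (valid because $P_0(m,n)$ is closed under nonnegative scaling).
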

\begin{proof} By invoking Proposition \ref{prop_basis}, we have
\begin{eqnarray}
 & &{\rm conv}(P_0(m,n))\nonumber\\
&=& {\rm cone}(P_0(m,n)) \nonumber \\
    &\supseteq & {\rm cone}\left(\{E^{i_1\ldots i_m}, i_j\in [n], j\in[m]\}\cup \{-E^{i_1\ldots i_m}, i_j\in [n], j\in[m], \delta_{i_1\cdots i_m}=0\}\right) \nonumber\\
    &=& \left\{\sum\limits_{i_1,\ldots, i_m\in [n]}a_{i_1\ldots i_m}E^{i_1\ldots i_1}+\sum_{\begin{subarray}{c}
i_1,\ldots, i_m\in [n]\nonumber\\
\delta_{i_1\cdots i_m}=0\nonumber
\end{subarray}} -b_{i_1\ldots i_m}E^{i_1\ldots i_m}: a_{i_1 \ldots i_1}\geq 0, b_{i_1 \ldots i_m}\in {\mathbb{R}}, \forall i_1,\ldots, i_m\in [n]\right\} \nonumber\\
    &=& \left\{\sum\limits_{i\in [n]}a_{i_1\ldots i_1}E^{i_1\ldots i_1}+\sum_{\begin{subarray}{c}
i_1,\ldots, i_m\in [n]\nonumber\\
\delta_{i_1\cdots i_m}=0\nonumber
\end{subarray}} c_{i_1\ldots i_m}E^{i_1\ldots i_m}: a_{i_1 \ldots i_1}\geq 0, c_{i_1 \ldots i_m}\in {\mathbb{R}}, \forall i_1,\ldots, i_m\in [n] \right\}  \nonumber\\
    &=&  \{\ten{A}\in T_{m,n}: {\rm diag}(\ten{A})\geq 0\} \nonumber
\end{eqnarray}
On the other hand, for any P$_0$-tensor $\ten{A}$, we have ${\rm diag}(\ten{A})\geq 0$ from Theorem \ref{thm_subten}. Thus,
$${\rm conv}(P_0(m,n))\subseteq {\rm conv}(\{\ten{A}\in T_{m,n}: {\rm diag}(\ten{A})\geq 0\})=\{\ten{A}\in T_{m,n}: {\rm diag}(\ten{A})\geq 0\}.$$
This completes the proof.\end{proof}

The following two propositions are about the relationship between P-tensors and PSD tensors.

\begin{proposition}[\cite{Song15a}]
  Let $m$ be even. We have $$P(m,n)\cap S_{m,n}=PD_{m,n}~{\rm and}~~P_0(m,n)\cap S_{m,n}=PSD_{m,n}.$$
\end{proposition}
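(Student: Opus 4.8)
The plan is to prove the two set identities by establishing both inclusions in each, using Proposition \ref{prop_psd} for the straightforward containments and Theorem \ref{thm_heig} together with the classical spectral characterization of even-order symmetric PD/PSD tensors for the reverse ones. Note first that membership in $PD_{m,n}$ or $PSD_{m,n}$ already carries symmetry by the conventions of the notation table, so the inclusions $PD_{m,n}\subseteq P(m,n)\cap S_{m,n}$ and $PSD_{m,n}\subseteq P_0(m,n)\cap S_{m,n}$ reduce to $PD_{m,n}\subseteq P(m,n)$ and $PSD_{m,n}\subseteq P_0(m,n)$, which are exactly Proposition \ref{prop_psd}. This direction needs nothing about the parity of $m$ beyond what is implicit in the definitions of $PD$ and $PSD$.

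For the reverse inclusions, take $\ten{A}\in P(m,n)\cap S_{m,n}$. By Theorem \ref{thm_heig}, $\ten{A}$ has no non-positive H-eigenvalue. Since $m$ is even and $\ten{A}$ is symmetric, $\ten{A}$ has at least one H-eigenvalue, and by Qi's spectral characterization \cite{Qi05} the condition $\ten{A}{\bf x}^m>0$ for all nonzero ${\bf x}\in\R^n$ is equivalent to all H-eigenvalues of $\ten{A}$ being positive; hence $\ten{A}\in PD_{m,n}$, giving $P(m,n)\cap S_{m,n}\subseteq PD_{m,n}$. The $P_0$ case is entirely parallel: if $\ten{A}\in P_0(m,n)\cap S_{m,n}$, then Theorem \ref{thm_heig} says $\ten{A}$ has no negative H-eigenvalue, so all its H-eigenvalues are nonnegative, and the same characterization of Qi yields $\ten{A}{\bf x}^m\ge 0$ for all ${\bf x}\in\R^n$, i.e. $\ten{A}\in PSD_{m,n}$. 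Combining the two directions gives both equalities.

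The one genuinely nontrivial ingredient is imported rather than reproved: for an even-order symmetric tensor, positive (semi)definiteness is equivalent to positivity (nonnegativity) of all H-eigenvalues. This relies on the existence of H-eigenvalues for even-order symmetric tensors and on a compactness argument over the unit sphere, both due to Qi \cite{Qi05}, and I would simply cite it; everything else is a one-line application of Proposition \ref{prop_psd} and Theorem \ref{thm_heig}, so I do not expect any real obstacle beyond making sure the vacuous case (a symmetric even-order tensor with no H-eigenvalue) is excluded, which Qi's existence result handles.
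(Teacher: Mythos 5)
Your argument is correct and coincides with the one the paper relies on: the proposition is imported from Song and Qi, and the paper's own justification (stated just before Section 3's comparison) is exactly your combination of Proposition \ref{prop_psd} for the inclusions $PD_{m,n}\subseteq P(m,n)\cap S_{m,n}$, $PSD_{m,n}\subseteq P_0(m,n)\cap S_{m,n}$, and Theorem \ref{thm_heig} together with Qi's spectral characterization of even-order symmetric PD/PSD tensors for the reverse inclusions. Your care about existence of H-eigenvalues for even-order symmetric tensors is the right point to cite rather than reprove.
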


\begin{proposition}
  $NSPSD_{m,n}$ is a convex cone in $T_{m,n}$. Moreover,
  $$
  {\rm int}(NSPSD_{m,n}) = NSPD_{m,n}, \quad
  NSPD_{m,n} \subset P(m,n), \quad
  NSPSD_{m,n} \subset P_0(m,n).
  $$
\end{proposition}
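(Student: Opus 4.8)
The plan is to prove the three assertions about $NSPSD_{m,n}$ one at a time, drawing on the definitions and on Proposition~\ref{prop_psd}. First I would establish that $NSPSD_{m,n}$ is a convex cone. Closure under nonnegative scaling is immediate, since $(c\ten{A}){\bf x}^m = c(\ten{A}{\bf x}^m) \geq 0$ whenever $c \geq 0$ and $\ten{A}{\bf x}^m \geq 0$. For convexity, if $\ten{A},\ten{B} \in NSPSD_{m,n}$ and $t \in [0,1]$, then $\big(t\ten{A} + (1-t)\ten{B}\big){\bf x}^m = t\,\ten{A}{\bf x}^m + (1-t)\,\ten{B}{\bf x}^m \geq 0$ for every ${\bf x}$, using linearity of the map $\ten{A} \mapsto \ten{A}{\bf x}^m$ in $\ten{A}$. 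I should note that $m$ must be even for $NSPSD_{m,n}$ to be nontrivial, exactly as observed after Proposition~\ref{prop_psd}, but this does not affect the cone/convexity argument.

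Next I would handle the inclusions $NSPD_{m,n} \subset P(m,n)$ and $NSPSD_{m,n} \subset P_0(m,n)$: these are precisely the content of Proposition~\ref{prop_psd}, so I would simply invoke it. (One might also remark that the inclusions are strict, since a P-tensor need not be symmetric in the way required for PSD-ness to be the full story, but the statement as written only claims $\subset$; I would read $\subset$ here as ordinary containment consistent with the paper's usage, or point to a nonsymmetric P-tensor with a negative value of $\ten{A}{\bf x}^m$ if strictness is intended.)

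The substantive part is ${\rm int}(NSPSD_{m,n}) = NSPD_{m,n}$, which I would prove by two inclusions. For $NSPD_{m,n} \subseteq {\rm int}(NSPSD_{m,n})$: fix a PD tensor $\ten{A}$ and let $\mu = \min_{\|{\bf x}\|=1} \ten{A}{\bf x}^m$, which is positive and attained by compactness of the unit sphere and continuity of ${\bf x}\mapsto \ten{A}{\bf x}^m$. If $\|\ten{B} - \ten{A}\|$ is small enough (in any fixed norm on $T_{m,n}$), then $|(\ten{B}-\ten{A}){\bf x}^m| \le \|\ten{B}-\ten{A}\|\,\|{\bf x}\|^m < \mu\|{\bf x}\|^m$ by an elementary bound on the multilinear form, so $\ten{B}{\bf x}^m > 0$ for all nonzero ${\bf x}$ and $\ten{B}$ is PD, hence $\ten{B}\in NSPSD_{m,n}$; thus a whole ball around $\ten{A}$ lies in $NSPSD_{m,n}$. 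For the reverse inclusion ${\rm int}(NSPSD_{m,n}) \subseteq NSPD_{m,n}$: suppose $\ten{A}$ is PSD but not PD, so $\ten{A}{\bf y}^m = 0$ for some nonzero ${\bf y}$; then for any $\epsilon > 0$ the tensor $\ten{A} - \epsilon\, {\bf y}^{\otimes m}/\|{\bf y}\|^{?}$ — more simply, a tensor agreeing with $\ten{A}$ off a suitable entry and perturbed so that its value at ${\bf y}$ becomes negative — lies arbitrarily close to $\ten{A}$ but is not PSD, so $\ten{A}\notin{\rm int}(NSPSD_{m,n})$. I expect the main obstacle to be making this last perturbation clean: one must exhibit, for a given null vector ${\bf y}$, an arbitrarily small tensor $\ten{E}$ with $\ten{E}{\bf y}^m < 0$, for which taking $\ten{E} = -\epsilon\,({\bf y}/\|{\bf y}\|_m)^{\otimes m}$ (so $\ten{E}{\bf y}^m = -\epsilon\|{\bf y}\|_m^m < 0$ with $\|\ten{E}\|$ proportional to $\epsilon$) works, since $(\ten{A}+\ten{E}){\bf y}^m = -\epsilon\|{\bf y}\|_m^m < 0$. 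Here I would rely on the standard facts that $\ten{A}{\bf x}^m$ is continuous in $(\ten{A},{\bf x})$, that the unit sphere is compact, and that $|\ten{A}{\bf x}^m| \le \|\ten{A}\|\,\|{\bf x}\|^m$ for an appropriate pair of norms — all routine, so no step should require a lengthy computation.
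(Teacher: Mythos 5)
Your proof is correct and follows the same route as the paper, which simply asserts that the convex-cone and interior claims ``follow readily from Definition~\ref{def_psd}'' and obtains both inclusions by invoking Proposition~\ref{prop_psd}; you merely supply the standard compactness and rank-one--perturbation details that the paper leaves implicit. (Only cosmetic point: in the perturbation $\ten{E}=-\epsilon({\bf y}/\|{\bf y}\|)^{\otimes m}$ one gets $\ten{E}{\bf y}^m=-\epsilon\bigl({\bf y}^\top{\bf y}/\|{\bf y}\|\bigr)^m<0$, so the argument goes through regardless of which norm you normalize by.)
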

\begin{proof}
  The convexity of $NSPSD_{m,n}$ and the assertion ${\rm int}\big(NSPSD_{m,n}\big) = NSPD_{m,n}$ follow readily from Definition \ref{def_psd}. And the remaining parts are obtained by invoking Proposition \ref{prop_psd}.
\end{proof}

\begin{corollary} $P(m,n)\cap Z_{m,n}=SM_{m,n}$, and $P_0(m,n)\cap Z_{m,n}=M_{m,n}$.
\end{corollary}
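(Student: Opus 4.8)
The plan is to prove each identity by a pair of opposite inclusions, using the H-tensor criterion of Proposition~\ref{thm_hten} for the inclusions $SM_{m,n}\subseteq P(m,n)$ and $M_{m,n}\subseteq P_0(m,n)$, and the spectral characterization of Theorem~\ref{thm_heig} for the reverse inclusions. Throughout, a Z-tensor is written as $\ten{A}=s\ten{I}-\ten{B}$ with $\ten{B}$ nonnegative; then $\ten{A}$ is an M-tensor exactly when $s\geq\rho(\ten{B})$ and a nonsingular M-tensor exactly when $s>\rho(\ten{B})$.

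First I would check that a nonsingular M-tensor lies in $P(m,n)\cap Z_{m,n}$ and that an M-tensor lies in $P_0(m,n)\cap Z_{m,n}$. Membership in $Z_{m,n}$ is immediate from the representation $\ten{A}=s\ten{I}-\ten{B}$. Since a diagonal entry of a nonnegative tensor cannot exceed its spectral radius (by monotonicity of $\rho(\cdot)$ under passing to principal subtensors, a standard fact in the M-tensor literature), each diagonal entry $a_{ii\dots i}=s-b_{ii\dots i}$ is positive in the nonsingular case and nonnegative in general. Therefore the comparison tensor $\ten{M}(\ten{A})$ equals $\ten{A}$ itself: the off-diagonal entries of a Z-tensor are nonpositive, so negating their absolute values reproduces them, while the (nonnegative) diagonal entries are reproduced by their absolute values. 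Hence $\ten{A}$ is a nonsingular H-tensor with positive diagonal, resp.\ an H-tensor with nonnegative diagonal, and Proposition~\ref{thm_hten} shows that $\ten{A}$ is a P-tensor, resp.\ a P$_0$-tensor. (For the P$_0$ case one may instead argue by continuity: $\ten{A}+\epsilon\ten{I}$ is a nonsingular M-tensor, hence a P-tensor, for each $\epsilon>0$, so $\ten{A}\in{\rm cl}\big(P(m,n)\big)=P_0(m,n)$.)

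For the reverse inclusions, let $\ten{A}=s\ten{I}-\ten{B}$ with $\ten{B}$ nonnegative be a P-tensor (resp.\ a P$_0$-tensor). By the Perron--Frobenius theorem for nonnegative tensors, $\rho(\ten{B})$ is an H-eigenvalue of $\ten{B}$ with a nonnegative H-eigenvector ${\bf v}\neq{\bf 0}$, so that $\ten{A}{\bf v}^{m-1}=s{\bf v}^{[m-1]}-\ten{B}{\bf v}^{m-1}=\big(s-\rho(\ten{B})\big){\bf v}^{[m-1]}$; thus $s-\rho(\ten{B})$ is an H-eigenvalue of $\ten{A}$. By Theorem~\ref{thm_heig}, a P-tensor has no non-positive H-eigenvalue, forcing $s>\rho(\ten{B})$, i.e.\ $\ten{A}\in SM_{m,n}$; and a P$_0$-tensor has no negative H-eigenvalue, forcing $s\geq\rho(\ten{B})$, i.e.\ $\ten{A}\in M_{m,n}$. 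Combined with the previous paragraph, this yields $P(m,n)\cap Z_{m,n}=SM_{m,n}$ and $P_0(m,n)\cap Z_{m,n}=M_{m,n}$. The only ingredient not already supplied in the paper is the (weak) Perron--Frobenius theorem for nonnegative tensors---that the spectral radius is attained as an H-eigenvalue with a nonnegative H-eigenvector---together with the elementary bound on diagonal entries used above; both are classical, and I expect no genuine difficulty beyond invoking them correctly, since everything else is bookkeeping with the comparison tensor plus direct appeals to Theorem~\ref{thm_heig} and Proposition~\ref{thm_hten}.
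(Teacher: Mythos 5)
Your proof is correct. The paper states this corollary without giving any argument, and what you write is exactly the intended derivation from the surrounding results: the inclusions $SM_{m,n}\subseteq P(m,n)\cap Z_{m,n}$ and $M_{m,n}\subseteq P_0(m,n)\cap Z_{m,n}$ follow from the observation $\ten{M}(\ten{A})=\ten{A}$ together with Proposition~\ref{thm_hten} (or, for the P$_0$ case, your alternative via $\ten{A}+\epsilon\ten{I}$ and ${\rm cl}\big(P(m,n)\big)=P_0(m,n)$), while the reverse inclusions follow from Theorem~\ref{thm_heig} applied to the H-eigenvalue $s-\rho(\ten{B})$ produced by the weak Perron--Frobenius theorem. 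The only ingredients not proved in the paper (weak Perron--Frobenius for nonnegative tensors and the bound $\rho(\ten{B})\geq b_{ii\dots i}$) are classical facts already underlying the M-tensor literature the paper cites, so there is no gap.
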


\begin{theorem} $P(m,n)\cap CP_{m,n}=SCP_{m,n}$.
\end{theorem}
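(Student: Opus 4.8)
The plan is to prove the two inclusions $SCP_{m,n} \subseteq P(m,n)\cap CP_{m,n}$ and $P(m,n)\cap CP_{m,n} \subseteq SCP_{m,n}$ separately. The first is immediate: every strongly completely positive tensor is by definition completely positive (a spanning nonnegative decomposition is in particular a nonnegative decomposition) and is a P-tensor by Proposition \ref{prop_cp}, so $SCP_{m,n} \subseteq P(m,n)\cap CP_{m,n}$.

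For the reverse and substantive inclusion, I would take $\ten{A} \in P(m,n)\cap CP_{m,n}$, fix any completely positive decomposition $\ten{A} = \sum_{k=1}^r {\bf u}_k^{\otimes m}$ with ${\bf u}_k \in \mathbb{R}_+^n$, and recall (as in the proof of Proposition \ref{prop_cp}) that $\ten{A}{\bf x}^{m-1} = \sum_{k=1}^r {\bf u}_k ({\bf u}_k^\top {\bf x})^{m-1}$ for all ${\bf x}$. The key step is to show that ${\bf u}_1,\dots,{\bf u}_r$ span $\mathbb{R}^n$, for then the fixed decomposition itself certifies that $\ten{A}$ is strongly completely positive. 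If they did not span, I would pick a nonzero ${\bf v}$ orthogonal to every ${\bf u}_k$; then $\ten{A}{\bf v}^{m-1} = \sum_{k=1}^r {\bf u}_k ({\bf u}_k^\top {\bf v})^{m-1} = {\bf 0}$ (using $m-1 \ge 1$), hence $v_i^{m-1}(\ten{A}{\bf v}^{m-1})_i = 0$ for every $i$, so no index $i$ satisfies \eqref{eq_pten} at the nonzero vector ${\bf v}$ --- contradicting that $\ten{A}$ is a P-tensor. This yields $\ten{A} \in SCP_{m,n}$ and completes the proof.

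I do not expect a genuine obstacle, but two points deserve care. First, strong complete positivity only asks for the \emph{existence} of one nonnegative decomposition whose summand vectors span $\mathbb{R}^n$, so it is enough to verify the spanning property for a single, arbitrarily chosen completely positive decomposition of $\ten{A}$; there is no need to reason about all decompositions simultaneously (though in fact ${\rm span}\{{\bf u}_1,\dots,{\bf u}_r\}$ coincides with ${\rm span}\{\ten{A}{\bf x}^{m-1} : {\bf x} \in \mathbb{R}^n\}$ and is therefore decomposition-independent, via the linear independence of powers of pairwise non-proportional linear forms). Second, the argument genuinely uses the \emph{strict} P-tensor inequality: for a P$_0$-tensor the vanishing vector ${\bf v}$ would still satisfy $v_i \neq 0$ together with $v_i^{m-1}(\ten{A}{\bf v}^{m-1})_i \ge 0$, so no contradiction would arise --- consistent with the theorem being stated for $P(m,n)$ and not $P_0(m,n)$.
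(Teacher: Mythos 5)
Your proof is correct, and it distributes the work in the opposite way from the paper. For $SCP_{m,n}\subseteq P(m,n)\cap CP_{m,n}$ you simply invoke Proposition \ref{prop_cp} (which indeed already asserts that an SCP tensor is a P-tensor), whereas the paper re-derives this inclusion from scratch, splitting into even and odd $m$ and showing that a non-P SCP tensor would force ${\bf y}^\top{\bf u}_k=0$ for all $k$ and hence ${\bf y}={\bf 0}$. Conversely, for $P(m,n)\cap CP_{m,n}\subseteq SCP_{m,n}$, which the paper dispatches in one line (``follows readily from Theorem \ref{thm_heig} and the definition''), you supply the explicit argument: a nonzero ${\bf v}$ orthogonal to all ${\bf u}_k$ in a fixed CP decomposition gives $\ten{A}{\bf v}^{m-1}={\bf 0}$, so no index satisfies \eqref{eq_pten}, contradicting the P-property. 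This is the same mechanism the paper has in mind (such a ${\bf v}$ would make $0$ an H-eigenvalue of $\ten{A}$, contradicting Theorem \ref{thm_heig}), but your version is self-contained and arguably cleaner, and your observation that it suffices to verify spanning for one arbitrarily chosen decomposition is exactly the right point. The only blemish is the parenthetical claim that ${\rm span}\{{\bf u}_1,\dots,{\bf u}_r\}$ is decomposition-independent ``via the linear independence of powers of pairwise non-proportional linear forms'': such powers need not be linearly independent in general (for instance $(x_1+x_2)^2+(x_1-x_2)^2=2x_1^2+2x_2^2$), so that aside would need a different justification --- but, as you yourself note, it is nowhere used in the proof, so the argument stands.
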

\begin{proof} To get the inclusion $SCP_{m,n}\subseteq P(m,n)\cap CP_{m,n}$, it suffices to show that any tensor $\ten{A}\in  SCP_{m,n}$ is a P-tensor. By the definition of strongly completely positive tensor, we can find some nonnegative vectors ${\bf u}_k$, $k=1,\ldots, r$, such that $\ten{A}=\sum\limits_{k=1}^r\left({\bf u}_k\right)^m$ and span$\{{\bf u}_1,\ldots, {\bf u}_r\}=\mathbb{R}^n$. Assume on the contrary that $\ten{A}$ is not a P-tensor, that is, there exists some nonzero ${\bf y}\in \R^n$ such that for any $i\in \textrm{supp}({\bf y}):=\{i:y_i\neq 0\}$, \begin{equation}\label{d} y_i^{m-1}(\ten{A}{\bf y}^{m-1})_i\leq 0. \end{equation} When $m$ is even, it follows readily from (\ref{d}) that
$$0\geq \sum\limits_{i\in {\rm supp}({\bf y})} y_i (\ten{A}{\bf y}^{m-1})_i=\ten{A} {\bf y}^m=\sum\limits_{k=1}^r\left({\bf y}^T{\bf u}_k\right)^m\geq 0.$$
Thus, $ \ten{A} {\bf y}^m=0$. Together with span$\{{\bf u}_1,\ldots, {\bf u}_r\}=\mathbb{R}^n$, we have ${\bf y}={\bf 0}$, which is a contradiction. When $m$ is odd, (\ref{d}) indicates that for all $i\in {\rm supp}({\bf y})$,
$$0\geq (\ten{A}{\bf y}^{m-1})_i= \sum\limits_{k=1}^ru^{(k)}_i\left({\bf y}^T{\bf u}_k\right)^{m-1}\geq 0.$$ This indicates $$ u^{(k)}_i({\bf y}^T{\bf u}_k)=0, ~\forall i\in {\rm supp}({\bf y}),~\forall k\in [r].$$  Thus, $$({\bf y}^T{\bf u}_k)^2=\sum\limits_{i\in  {\rm supp}({\bf y}) } y_iu^{(k)}_i\left({\bf y}^T{\bf u}_k\right)=0, \forall k\in [r].$$ Since span$\{{\bf u}_1,\ldots, {\bf u}_r\}=\mathbb{R}^n$, we have ${\bf y}={\bf 0}$, which is a contradiction. Therefore, $\ten{A}$ is a P-tensor. The remaining inclusion follows readily from Theorem \ref{thm_heig} and the definition of strongly completely positive tensors.
\end{proof}


\section{Tensor Complementarity Problems and Some Other Applications}

The tensor complementarity problem (see \cite{Che15,Luo15,Song15c,Song15b}) is referred to finding some vector ${\bf x}$ which satisfying
\begin{equation}\label{eq_tcp}
{\bf x} \geq {\bf 0},\ \ten{A}{\bf x}^{m-1} + {\bf q} \geq {\bf 0},\ \langle {\bf x}, \ten{A}{\bf x}^{m-1} + {\bf q} \rangle = 0,
\end{equation}
which is a generalization of the linear complementarity problem and a special case of the nonlinear complementarity problem. The P-matrices play an essential role in linear complementarity problems. That is, if $A$ is a P-matrix, then the following linear complementarity problem
$$
{\bf x} \geq {\bf 0},\ A{\bf x} + {\bf q} \geq {\bf 0},\ \langle {\bf x}, A{\bf x} + {\bf q} \rangle = 0,
$$
has a unique solution for an arbitrary vector ${\bf q}$. We will see shortly that if the tensor $\ten{A}$ in the Problem \eqref{eq_tcp} is a P-tensor, then we have some characterization of its solution set, although the solution is never necessarily unique. Before that, we need an important lemma from Mor{\'e} \cite{More74}.

\begin{lemma}[\cite{More74}]
  Let ${\bf f}:\mathbb{R}^n \to \mathbb{R}^n$ be a continuous mapping on the rectangular cone $K_\mathbb{R}$, and assume that for each ${\bf x} \neq {\bf 0}$ in $K_\mathbb{R}$,
  $$
  \max_{i=1,2,\dots,n} x_i \big(f_i({\bf x}) - f_i({\bf 0})\big) > 0.
  $$
  If the mapping ${\bf g}:\mathbb{R}^n \to \mathbb{R}^n$ defined by ${\bf g}({\bf x}) = {\bf f}({\bf x}) - {\bf f}({\bf 0})$ is positively homogeneous, then for each ${\bf z} \in \mathbb{R}^n$, there is an ${\bf x}^\ast \geq_{K_\mathbb{R}} {\bf 0}$ with
  \begin{equation}\label{eq_ncp}
  {\bf f}({\bf x}^\ast) \geq_{K_\mathbb{R}^\ast} {\bf z} \quad\text{and}\quad \langle {\bf x}^\ast, {\bf f}({\bf x}^\ast) - {\bf z} \rangle = 0.
  \end{equation}
  Moreover, the set of ${\bf x}^\ast \geq_{K_\mathbb{R}} {\bf 0}$ which satisfy \eqref{eq_ncp} is compact.
\end{lemma}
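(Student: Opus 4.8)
The statement is due to Mor\'e \cite{More74}, so my plan is to reconstruct the standard coercivity argument behind it. The proof naturally splits into an existence part and a compactness part, and both rest on a single a priori bound for the solutions of \eqref{eq_ncp}.

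For existence, I would first replace the (possibly unbounded) problem \eqref{eq_ncp} by a family of truncated variational inequalities. For $r>0$ set $C_r = \{\mathbf{x}\in K_\mathbb{R}:\|\mathbf{x}\|\le r\}$, which is compact and convex; by the Hartman--Stampacchia theorem (a consequence of Brouwer's fixed point theorem) there is $\mathbf{x}^r\in C_r$ with $\langle \mathbf{f}(\mathbf{x}^r)-\mathbf{z},\,\mathbf{y}-\mathbf{x}^r\rangle\ge 0$ for all $\mathbf{y}\in C_r$. Since $K_\mathbb{R}$ is a cone, whenever $\|\mathbf{x}^r\|<r$ this variational inequality over $C_r$ upgrades to the full cone $K_\mathbb{R}$, which is exactly $\mathbf{f}(\mathbf{x}^r)-\mathbf{z}\geq_{K_\mathbb{R}^\ast}\mathbf{0}$ together with $\langle \mathbf{x}^r,\mathbf{f}(\mathbf{x}^r)-\mathbf{z}\rangle=0$, i.e. $\mathbf{x}^r$ solves \eqref{eq_ncp}. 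Taking $\mathbf{y}=\mathbf{0}\in C_r$ also gives $\langle \mathbf{x}^r,\mathbf{f}(\mathbf{x}^r)-\mathbf{z}\rangle\le 0$, and because $K_\mathbb{R}$ is rectangular this sign information passes to the coordinates: $x_i^r\,(f_i(\mathbf{x}^r)-z_i)\le 0$ for every $i$.

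The crux is to bound $\|\mathbf{x}^r\|$ independently of $r$, so that for all large $r$ the truncated solution is interior and hence solves \eqref{eq_ncp}. Suppose not: choose $r_k\to\infty$ with $\|\mathbf{x}^{r_k}\|=r_k$, put $\mathbf{y}^k=\mathbf{x}^{r_k}/r_k$, and pass to a subsequence with $\mathbf{y}^k\to\mathbf{y}\in K_\mathbb{R}$, $\|\mathbf{y}\|=1$. Writing $\mathbf{f}=\mathbf{g}+\mathbf{f}(\mathbf{0})$, the coordinatewise inequality becomes $x_i^{r_k}g_i(\mathbf{x}^{r_k})\le x_i^{r_k}\bigl(z_i-f_i(\mathbf{0})\bigr)$. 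Dividing by the appropriate power of $r_k$ and invoking positive homogeneity of $\mathbf{g}$ (which turns $g_i(\mathbf{x}^{r_k})$ into a fixed positive power of $r_k$ times $g_i(\mathbf{y}^k)$) together with continuity of $\mathbf{g}$, the right-hand side becomes negligible in the limit, so $y_i\,g_i(\mathbf{y})=y_i\bigl(f_i(\mathbf{y})-f_i(\mathbf{0})\bigr)\le 0$ for every $i$. Hence $\max_i y_i\bigl(f_i(\mathbf{y})-f_i(\mathbf{0})\bigr)\le 0$, contradicting the hypothesis since $\mathbf{y}\ne\mathbf{0}$ lies in $K_\mathbb{R}$. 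This yields the bound and hence existence.

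For compactness, the solution set of \eqref{eq_ncp} is closed because $\mathbf{f}$ is continuous and all three defining relations pass to limits, and it is bounded by rerunning the normalization argument above verbatim with a hypothetical unbounded sequence of genuine solutions in place of $\mathbf{x}^{r_k}$; a closed bounded subset of $\mathbb{R}^n$ is compact. I expect the main obstacle to be precisely the a priori bound: extracting the coordinatewise sign condition $x_i(f_i(\mathbf{x})-z_i)\le 0$ from the complementarity relation on a general rectangular cone, and matching the degree of positive homogeneity of $\mathbf{g}$ so that the rescaled remainder terms indeed vanish in the limit. The passage from the truncated variational inequality to a genuine solution of \eqref{eq_ncp}, and the closedness argument, are then routine.
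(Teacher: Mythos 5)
This lemma is quoted from Mor\'e \cite{More74}; the paper gives no proof of it (it is used as a black box to derive Theorem 6.2), so there is no internal argument to compare yours against. Your reconstruction is the standard coercivity route, essentially Mor\'e's own: solve the variational inequality on the truncations $C_r=K_\mathbb{R}\cap\{\|\mathbf{x}\|\le r\}$ by Hartman--Stampacchia, obtain an a priori bound by normalizing a putative unbounded sequence and using positive homogeneity of $\mathbf{g}$ together with the hypothesis $\max_i x_i\bigl(f_i(\mathbf{x})-f_i(\mathbf{0})\bigr)>0$, then pass from the truncated VI to the complementarity system once $\|\mathbf{x}^r\|<r$; compactness follows from closedness plus the same normalization applied to genuine solutions. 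This is sound, and the two points you flag as potential obstacles do work out, though one deserves to be stated more carefully: the coordinatewise inequality $x_i^r\bigl(f_i(\mathbf{x}^r)-z_i\bigr)\le 0$ does \emph{not} follow from the single aggregated inequality obtained by testing with $\mathbf{y}=\mathbf{0}$; instead test the VI with $\mathbf{y}$ equal to $\mathbf{x}^r$ with its $i$-th coordinate replaced by $0$, which lies in $C_r$ precisely because $K_\mathbb{R}$ is rectangular (each factor is a ray containing the origin) and zeroing a coordinate cannot increase the norm. With that, the homogeneity bookkeeping closes the contradiction: if $\mathbf{g}$ is positively homogeneous of degree $c>0$, then $x_i^{r_k}g_i(\mathbf{x}^{r_k})=r_k^{1+c}\,y_i^k g_i(\mathbf{y}^k)$ while the right-hand side $x_i^{r_k}\bigl(z_i-f_i(\mathbf{0})\bigr)$ is only $O(r_k)$, so in the limit $y_i\bigl(f_i(\mathbf{y})-f_i(\mathbf{0})\bigr)\le 0$ for all $i$ with $\mathbf{y}\ne\mathbf{0}$ in $K_\mathbb{R}$, contradicting the hypothesis; and for genuine solutions the complementarity over a rectangular cone already forces $x_i\bigl(f_i(\mathbf{x})-z_i\bigr)=0$ for every $i$, so the boundedness (hence compactness) of the solution set follows by the same normalization.
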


Let $K_\mathbb{R}=\mathbb{R}^n_+$ in the above lemma and denote ${\bf f}({\bf x}) = \ten{A}{\bf x}^{m-1} + {\bf q}$, where $\ten{A}$ is a P-tensor. Since ${\bf f}({\bf 0}) = {\bf q}$, then the definition of P-tensors and the homogeneousness of ${\bf f}({\bf x}) - {\bf f}({\bf 0}) = \ten{A}{\bf x}^{m-1}$ implies that ${\bf f}({\bf x})$ satisfies the conditions of the above lemma. Therefore, we have the following theorem about the tensor complementarity problems with P-tensors.

\begin{theorem}\label{thm_ptcp}
  If $\ten{A}$ is a P-tensor, then the complementarity problem \eqref{eq_tcp} has a nonempty compact solution set.
\end{theorem}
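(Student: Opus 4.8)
The plan is to invoke Moré's lemma directly, taking $K_\mathbb{R} = \mathbb{R}^n_+$ (which is self-dual, so $K_\mathbb{R}^\ast = \mathbb{R}^n_+$ as well) and $\mathbf{f}(\mathbf{x}) = \ten{A}\mathbf{x}^{m-1} + \mathbf{q}$. First I would record that $\mathbf{f}$ is continuous on $\mathbb{R}^n_+$ and that $\mathbf{g}(\mathbf{x}) := \mathbf{f}(\mathbf{x}) - \mathbf{f}(\mathbf{0}) = \ten{A}\mathbf{x}^{m-1}$ is positively homogeneous (of degree $m-1 > 0$), since $\ten{A}(t\mathbf{x})^{m-1} = t^{m-1}\ten{A}\mathbf{x}^{m-1}$ for every $t > 0$. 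Note also $\mathbf{f}(\mathbf{0}) = \mathbf{q}$.

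Next I would verify the remaining hypothesis of the lemma: for every $\mathbf{x} \in \mathbb{R}^n_+$ with $\mathbf{x} \neq \mathbf{0}$, one has $\max_{i} x_i\big(f_i(\mathbf{x}) - f_i(\mathbf{0})\big) = \max_i x_i (\ten{A}\mathbf{x}^{m-1})_i > 0$. This is where the P-tensor hypothesis enters: by Definition \ref{def_pten}, together with the equivalent reformulation noted immediately after it (namely, there is an index $i$ with $x_i \neq 0$ and $(\ten{A}\mathbf{x}^{m-1})_i > 0$), such an index $i$ exists; since $\mathbf{x} \in \mathbb{R}^n_+$ forces $x_i > 0$ rather than $x_i < 0$, the product $x_i (\ten{A}\mathbf{x}^{m-1})_i$ is strictly positive, which yields the required positive maximum.

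With all hypotheses in place, I would apply Moré's lemma with $\mathbf{z} = \mathbf{0}$: it produces an $\mathbf{x}^\ast \geq \mathbf{0}$ satisfying $\ten{A}(\mathbf{x}^\ast)^{m-1} + \mathbf{q} \geq \mathbf{0}$ and $\langle \mathbf{x}^\ast, \ten{A}(\mathbf{x}^\ast)^{m-1} + \mathbf{q}\rangle = 0$, i.e. precisely a solution of \eqref{eq_tcp}; hence the solution set is nonempty. Finally, the last sentence of Moré's lemma states that the set of all such $\mathbf{x}^\ast$ is compact, and this set is exactly the solution set of \eqref{eq_tcp}, so compactness follows.

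I do not expect a substantial obstacle here: the only point requiring a little care is the reduction of the P-tensor inequality $x_i^{m-1}(\ten{A}\mathbf{x}^{m-1})_i > 0$ to the form $x_i (\ten{A}\mathbf{x}^{m-1})_i > 0$ demanded by the lemma, which is immediate once attention is restricted to the nonnegative orthant; beyond that the proof is just a matter of matching notation to the statement of Moré's lemma.
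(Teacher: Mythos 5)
Your proposal is correct and follows essentially the same route as the paper: it applies Mor\'e's lemma with $K_\mathbb{R}=\mathbb{R}^n_+$, ${\bf f}({\bf x})=\ten{A}{\bf x}^{m-1}+{\bf q}$, and ${\bf z}={\bf 0}$, using the P-tensor property (restricted to the nonnegative orthant, where $x_i\neq 0$ means $x_i>0$) to verify the positivity hypothesis, and then reads off nonemptiness and compactness of the solution set from the lemma's conclusion. The only difference is that you spell out the verification of the hypotheses in more detail than the paper, which simply asserts that the definition of P-tensors and the homogeneity of $\ten{A}{\bf x}^{m-1}$ imply them.
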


We have another class of special tensors which guarantees the feasibility of the tensor complementarity problem \eqref{eq_tcp}.
\begin{definition}[S-tensor]
  Let $\ten{A}$ be an $m^{\rm th}$-order $n$-dimensional tensor. Then there exists a vector ${\bf x} > {\bf 0}$ such that $\ten{A}{\bf x}^{m-1} > {\bf 0}$, then we call $\ten{A}$ an S-tensor.
\end{definition}

From Theorem \ref{thm_ptcp}, we can conclude that P-tensors are special S-tensors.

\begin{theorem}
  A P-tensor is also an S-tensor.
\end{theorem}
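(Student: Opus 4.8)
The plan is to reduce the statement to Theorem \ref{thm_ptcp} by choosing a suitable right-hand side. Let $\mathbf{1}$ be the all-one vector and apply Theorem \ref{thm_ptcp} to the tensor complementarity problem \eqref{eq_tcp} with the particular choice $\mathbf{q} = -\mathbf{1}$ (the hypothesis of that theorem is only on $\ten{A}$, so it applies for every $\mathbf{q}$). This yields a solution $\mathbf{x}^\ast \geq \mathbf{0}$ satisfying
$$
\ten{A}(\mathbf{x}^\ast)^{m-1} - \mathbf{1} \geq \mathbf{0}, \qquad \big\langle \mathbf{x}^\ast,\ \ten{A}(\mathbf{x}^\ast)^{m-1} - \mathbf{1} \big\rangle = 0.
$$
The first relation already gives $\ten{A}(\mathbf{x}^\ast)^{m-1} \geq \mathbf{1} > \mathbf{0}$, i.e. the tensor--vector product is componentwise positive; and $\mathbf{x}^\ast \neq \mathbf{0}$, since $\mathbf{x}^\ast = \mathbf{0}$ would force $-\mathbf{1} = \ten{A}\mathbf{0}^{m-1} - \mathbf{1} \geq \mathbf{0}$, a contradiction.

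The remaining work is to upgrade the nonnegative vector $\mathbf{x}^\ast$ to the strictly positive witness demanded by the definition of an S-tensor. For this I would use the continuity of the polynomial map $\mathbf{y} \mapsto \ten{A}\mathbf{y}^{m-1}$ together with the fact that every component of $\ten{A}(\mathbf{x}^\ast)^{m-1}$ is at least $1$. Setting $\mathbf{x}(\epsilon) := \mathbf{x}^\ast + \epsilon \mathbf{1}$, we have $\mathbf{x}(\epsilon) > \mathbf{0}$ for every $\epsilon > 0$, while continuity guarantees a threshold $\epsilon_0 > 0$ such that $\ten{A}(\mathbf{x}(\epsilon))^{m-1} \geq \tfrac12 \mathbf{1} > \mathbf{0}$ componentwise for all $0 < \epsilon \leq \epsilon_0$. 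Any such $\mathbf{x}(\epsilon)$ then serves as a vector $\mathbf{x} > \mathbf{0}$ with $\ten{A}\mathbf{x}^{m-1} > \mathbf{0}$, so $\ten{A}$ is an S-tensor.

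There is no genuinely hard step here once Theorem \ref{thm_ptcp} is in hand; the only point requiring a little care is that the complementarity solution is merely nonnegative whereas the S-tensor definition insists on strict positivity, and the monotone perturbation $\mathbf{x}^\ast + \epsilon\mathbf{1}$ together with continuity of the homogeneous map $\ten{A}(\cdot)^{m-1}$ settles this. One could instead invoke Mor\'e's lemma directly with $K_{\mathbb{R}} = \mathbb{R}^n_+$, $\mathbf{f}(\mathbf{x}) = \ten{A}\mathbf{x}^{m-1} + \mathbf{1}$, and $\mathbf{z} = \mathbf{0}$, but this simply reconstructs the existence of $\mathbf{x}^\ast$, so appealing to Theorem \ref{thm_ptcp} is the cleaner route.
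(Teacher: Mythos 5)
Your proposal is correct and follows essentially the same route as the paper: apply Theorem \ref{thm_ptcp} with ${\bf q} = -{\bf 1}$ to get ${\bf x}^\ast \geq {\bf 0}$ with $\ten{A}({\bf x}^\ast)^{m-1} \geq {\bf 1}$, then perturb to ${\bf x}^\ast + \epsilon{\bf 1} > {\bf 0}$ and use continuity (the paper writes this as an $O(\epsilon)$ estimate) to keep $\ten{A}({\bf x}^\ast+\epsilon{\bf 1})^{m-1} > {\bf 0}$ for small $\epsilon$.
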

\begin{proof}
  Let $\ten{A}$ be a P-tensor, then from Theorem \ref{thm_ptcp} we know that there must exist some ${\bf x} \geq {\bf 0}$ such that
  $$
  \ten{A}{\bf x}^{m-1} \geq {\bf 1} \quad\text{and}\quad \langle {\bf x}, \ten{A}{\bf x}^{m-1} - {\bf 1} \rangle = 0,
  $$
  where ${\bf 1}$ denotes the all-one vector of the corresponding size. Then for an arbitrary $\epsilon >0$, we have ${\bf x} + \epsilon{\bf 1} > 0$ and
  $$
  \ten{A}({\bf x} + \epsilon{\bf 1})^{m-1} = \ten{A}{\bf x}^{m-1} + O(\epsilon) \geq {\bf 1} + O(\epsilon).
  $$
  When $\epsilon$ is small enough, the tensor-vector multiplication $\ten{A}({\bf x} + \epsilon{\bf 1})^{m-1} > {\bf 0}$. This completes the proof.
\end{proof}


As symmetric real tensors are closely related to higher order derivatives of sufficiently differentiable multivariate real-valued functions, we have the following property of symmetric P-tensors.

\begin{proposition}\label{derivative} For any given real-valued function $f:\R^n\rightarrow \R$ which is third-order continuously differentiable, any given vector ${\bf x}\in\R^n$, denote $\nabla f({\bf x})$ and $\nabla^2 f({\bf x})$ as the gradient vector and the Hessian matrix of $f$ at ${\bf x}$ respectively, and $\nabla^3 f({\bf x})$ as the third-order derivative tensor with
$$\nabla^3 f({\bf x}) {\bf d}^3:=\sum\limits_{i_1,i_2,i_3\in [n]}\frac{\partial^3 f}{\partial x_{i_1} \partial x_{i_2} \partial x_{i_3}} d_{i_1}d_{i_2}d_{i_3}.$$ If $\nabla f({\bf x})={\bf 0}$, $\nabla^2 f({\bf x})=0$, and $\nabla^3 f({\bf x})$ is a P-tensor at some point ${\bf x}$, then for any nonzero $d\in\R^n_+$, we can find some sufficiently small $\alpha({\bf d})>0$ such that $f({\bf x}+\alpha({\bf d}){\bf d})>f({\bf x})>f({\bf x}-\alpha({\bf d}){\bf d})$.
\end{proposition}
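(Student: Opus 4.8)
The plan is to Taylor-expand $f$ around ${\bf x}$ along the ray ${\bf x} + t{\bf d}$ for small $t \in \mathbb{R}$ and exploit the vanishing of the first two derivatives. Since $f$ is third-order continuously differentiable and $\nabla f({\bf x}) = {\bf 0}$, $\nabla^2 f({\bf x}) = 0$, the Taylor expansion gives
$$
f({\bf x} + t{\bf d}) = f({\bf x}) + \frac{t^3}{6}\,\nabla^3 f({\bf x})\,{\bf d}^3 + o(t^3)
$$
as $t \to 0$. So everything reduces to controlling the sign of $\nabla^3 f({\bf x})\,{\bf d}^3$ for a fixed nonzero ${\bf d} \in \mathbb{R}^n_+$. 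Write $\ten{A} := \nabla^3 f({\bf x})$, which is a symmetric P-tensor of order $m = 3$.

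The key step is to show that $\ten{A}\,{\bf d}^3 > 0$ for every nonzero ${\bf d} \in \mathbb{R}^n_+$. Here I would invoke Proposition~\ref{prop_cop}: a symmetric P-tensor is strictly copositive, which by definition means exactly $\ten{A}\,{\bf d}^3 > 0$ for all nonzero ${\bf d} \in \mathbb{R}^n_+$. (Since $m=3$ is odd, this is where the homogeneous definition of P-tensors genuinely pays off, and where the chain Theorem~\ref{thm_subten} $\Rightarrow$ no nonpositive H-eigenvalue of principal subtensors $\Rightarrow$ strict copositivity is being used.) With $c := \ten{A}\,{\bf d}^3 > 0$ in hand, the expansion becomes
$$
f({\bf x} + t{\bf d}) - f({\bf x}) = \frac{c}{6}\,t^3 + o(t^3),
$$
so there is $\alpha({\bf d}) > 0$ small enough that for all $0 < t \le \alpha({\bf d})$ the right-hand side has the same sign as $t^3$. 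Taking $t = \alpha({\bf d})$ gives $f({\bf x} + \alpha({\bf d}){\bf d}) > f({\bf x})$, and taking $t = -\alpha({\bf d})$ gives $f({\bf x} - \alpha({\bf d}){\bf d}) < f({\bf x})$, which is precisely the claimed chain of inequalities.

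The only genuine obstacle is establishing the strict positivity $\ten{A}\,{\bf d}^3 > 0$ on the nonnegative orthant; once Proposition~\ref{prop_cop} is available this is immediate, so the remaining work is the routine Taylor-remainder bookkeeping, namely choosing $\alpha({\bf d})$ so that the $o(t^3)$ term is dominated by $\tfrac{c}{6}|t|^3$ on $(0,\alpha({\bf d})]$. I would also note in passing that the statement would be false without copositivity in the odd-order setting — a P-tensor can take negative values off the nonnegative orthant — which is why the hypothesis restricts to ${\bf d} \in \mathbb{R}^n_+$.
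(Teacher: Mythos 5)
Your proposal is correct and follows essentially the same route as the paper: Taylor expansion at ${\bf x}$ with the first- and second-order terms vanishing, then Proposition~\ref{prop_cop} (symmetric P-tensor $\Rightarrow$ strictly copositive) to get $\nabla^3 f({\bf x})\,{\bf d}^3>0$ for nonzero ${\bf d}\in\R^n_+$, so the cubic term dictates the sign for small $\alpha({\bf d})$ in both directions. Your explicit treatment of the $t=-\alpha({\bf d})$ case and of the remainder domination is slightly more careful than the paper's one-line conclusion, but it is the same argument.
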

\begin{proof} Write the third-order Taylor expansion of $f$ at $x$ as follows:
\begin{equation}\label{Taylor}
f({\bf x}+\alpha({\bf d}){\bf d})=f({\bf x})+\alpha({\bf d})\nabla f({\bf x})^T {\bf d}+\frac{\alpha({\bf d})^2}{2}{\bf d}^T\nabla^2 f({\bf x}) {\bf d}+\frac{\alpha({\bf d})^3}{6}\nabla^3 f({\bf x}) {\bf d}^3+o\left(\alpha({\bf d})^3\|{\bf d}\|^3\right),
\end{equation} By employing Corollary \ref{prop_cop}, $\nabla^3 f({\bf x})$ is a strictly copositive tensor. This immediately implies that for any nonzero $d\in\R^n_+$, $\nabla^3 f({\bf x}) {\bf d}^3>0$. Combining with the conditions $\nabla f({\bf x})={\bf 0}$ and $\nabla^2 f({\bf x})=0$, the Taylor expansion (\ref{Taylor}) implies the desired assertion.\end{proof}

\begin{remark} By using the similar scheme of proof, the result in Proposition \ref{derivative} can be generalized to any odd-order case larger than $3$.
\end{remark}




\section{Conclusions}
By employing a homogenous formula, the concept of higher-order P-tensor has been introduced which is well-defined for all even and odd order tensors. This class of tensors has been shown to contain many important subclasses of tensors, including the well-known positive definite tensors, the nonsingular M-tensors, the nonsingular H-tensors with positive diagonal entries, the strictly diagonally dominant tensors with positive diagonal entries, and so on. As an extension of PSD tensors, the P$_0$-tensors and their relationship among other PSD extensions have been characterized. Additionally, many structured P- (P$_0$-) tensors have been stated and many related tensor cones have been analyzed. All of these can be treated as refinement components for tensor theory and analysis. As an important application of P-tensors, the tensor complementarity problems with P-tensors have been shown to admit nonempty and compact solution sets. The application in multivariate real-valued functions has also been discussed.


\bibliographystyle{Abbrv}
\bibliography{bib_PTensor}

\end{document}